\newtheorem{theorem}{Theorem}
\newtheorem*{theorem*}{Theorem}
\newtheorem{proposition}{Proposition}
\newtheorem{lemma}{Lemma}
\newtheorem{corollary}{Corollary}
\theoremstyle{remark}
\newcommand{\C}{\mathbb{C}}
\newcommand{\D}{\Omega}
\newcommand{\ep}{\varepsilon}
\newcommand{\Dc}{\overline{\Omega}}
\newcommand{\dbar}{\overline{\partial}}
\title{On a theorem of Bishop and commutants of 
		Toeplitz operators in $\mathbb{C}^n$}
\author{S\"{o}nmez \c{S}ahuto\u{g}lu}
\author{Akaki Tikaradze}
\email{Sonmez.Sahutoglu@utoledo.edu, Akaki.Tikaradze@utoledo.edu}
\address{University of Toledo, Department of Mathematics \& Statistics, 
Toledo, OH 43606, USA}
\subjclass[2010]{Primary 46J15; Secondary 32A65}
\keywords{Bishop's theorem, pseudoconvex domain, Toeplitz operator}
\thanks{The second author is supported in part by the University of Toledo 
	Summer Research Awards and Fellowships Program.}
\date{\today}
\begin{document}

\begin{abstract}
We prove an approximation theorem on a class of domains in 
$\mathbb{C}^n$ on which the $\overline{\partial}$-problem 
is solvable in $L^{\infty}$. Furthermore, as a corollary, we obtain a 
version of the Axler-\v{C}u\v{c}kovi\'c-Rao Theorem in higher dimensions.
\end{abstract}
%%%%%%%%%%%%%%%%%%%%%%%%%
\maketitle
%%%%%%%%%%%%%%%%%%%%%%%%%

Let $\D$ be a domain in $\C^n$ and $\phi$ be a complex-valued function 
on $\D$. Let $H^{\infty}(\D)$ and $H^{\infty}(\D)[\phi]$ denote the set of 
bounded holomorphic functions on $\D$ and the algebra generated by 
$\phi$ over $H^{\infty}(\D)$, respectively. In 1989, Christopher Bishop 
proved the following approximation theorem 
(see \cite[Theorem 1.2]{Bishop89}).

\begin{theorem*}[Bishop]
Let $\Omega$ be an open set in $\mathbb{C}$ and $f$ be a bounded 
holomorphic function on $\D$ that is non-constant on every connected 
component of $\Omega$. Then $H^{\infty}(\Omega)[\overline{f}]$ is 
dense in $C(\overline{\Omega})$ in the uniform topology.
\end{theorem*}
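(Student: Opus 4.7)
The plan is to apply the complex Stone--Weierstrass theorem to $A$, the uniform closure of $H^{\infty}(\Omega)[\overline{f}]$ in $C(\overline{\Omega})$. Since $A$ contains the constants and the coordinate function $z \in H^{\infty}(\Omega)$, it separates points of $\overline{\Omega}$; hence $A = C(\overline{\Omega})$ will follow once $A$ is closed under complex conjugation. Using that $A$ is a closed algebra and $\overline{\sum_j h_j \overline{f}^{j}} = \sum_j \overline{h_j}\, f^{j}$ with $f \in A$, conjugation closure reduces to proving $\overline h \in A$ for every $h \in H^\infty(\Omega)$; approximating such $h$ on compacta by rationals in $z$ with poles outside $\overline{\Omega}$ (Runge) and then conjugating reduces the problem further to the single statement $\overline{z} \in A$.

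At any regular point $z_0 \in \Omega$, i.e.\ one with $f'(z_0) \neq 0$, the inverse function theorem yields a holomorphic local inverse $g$ of $f$ near $w_0 = f(z_0)$, so that $\overline{z} = \tilde{g}(\overline{f})$ on a neighborhood $V_{z_0}$ with $\tilde{g}(\zeta) := \overline{g(\overline{\zeta})}$ holomorphic. Truncating the Taylor series of $\tilde g$ at $\overline{w_0}$ gives, for any $\ep > 0$, a polynomial $p_{z_0}$ with
\begin{equation*}
\bigl|\overline{z} - p_{z_0}(\overline{f})\bigr| < \ep \quad \text{and} \quad \bigl|\overline{f'}\, p_{z_0}'(\overline{f}) - 1\bigr| < \ep \quad \text{on } V_{z_0}.
\end{equation*}
Because $f$ is non-constant on every component, the critical set $\{f'=0\}$ is discrete, so the regular patches cover $\overline{\Omega}$ away from arbitrarily small disks around this set; on those disks one takes $p_{z_0}$ constant, using that $\overline{z}$ varies little there.

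To assemble a global approximation, choose a finite cover $\{V_k\}$ by such patches (on any given compact), a subordinate smooth partition of unity $\{\rho_k\}$, and set $G = \sum_k \rho_k\, p_k(\overline{f})$. Then $\|G - \overline{z}\|_\infty < \ep$, and using $\sum_k \rho_k \equiv 1$ one computes
\begin{equation*}
\dbar(G - \overline{z}) = \sum_k (\dbar \rho_k)\bigl(p_k(\overline{f}) - \overline{z}\bigr) + \sum_k \rho_k \bigl(\overline{f'}\, p_k'(\overline{f}) - 1\bigr),
\end{equation*}
which is uniformly $O(\ep)$ away from the critical disks and has $L^1$-norm $O(\ep)$ overall. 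The Cauchy--Pompeiu formula solves $\dbar u = \dbar(G - \overline{z})$ in $L^\infty$ with $\|u\|_\infty \to 0$ as $\ep \to 0$, so $G - u - \overline{z}$ is bounded and holomorphic on $\Omega$, lying in $H^\infty(\Omega) \subseteq A$.

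The principal obstacle is the final Cousin-type reassembly: because the cutoffs $\rho_k$ are not holomorphic, $G$ itself is not in $A$, and one must show that the smooth patching is uniformly approximable by genuine elements of $H^{\infty}(\Omega)[\overline f]$. The obstruction $\sum_k (\dbar\rho_k)\, p_k(\overline{f})$ is concentrated on overlaps $V_j \cap V_k$, where $p_j(\overline{f}) - p_k(\overline{f}) = O(\ep)$, so that a sequence of $\dbar$-solutions with $L^\infty$-estimates absorbs these overlap errors into bounded holomorphic coefficients of the powers of $\overline{f}$, yielding the desired approximation of $\overline z$ up to $O(\ep)$. Organizing the Cousin data so that solving $\dbar$ produces \emph{coefficients} rather than a single corrective function is the key difficulty, and is precisely what motivates the $L^\infty$-solvability hypothesis for $\dbar$ that underlies the higher-dimensional version pursued in this paper.
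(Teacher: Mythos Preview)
The paper does not prove Bishop's theorem; it is quoted as a known result from \cite{Bishop89} and used only as motivation. What the paper does prove (Theorems \ref{Thm1} and \ref{Thm2}) uses a method that the authors say is ``mainly inspired'' by Bishop's proof, so it is reasonable to compare your attempt to that method.

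Your argument has a genuine gap, and you essentially acknowledge it in the last paragraph. Having built $G=\sum_k\rho_k\,p_k(\overline f)$ with a partition of unity $\{\rho_k\}$ subordinate to patches in $\Omega$, you observe correctly that $G\notin A$ because the $\rho_k$ are not holomorphic. Solving $\dbar u=\dbar(G-\overline z)$ only tells you that $G-u-\overline z\in H^\infty(\Omega)$; it does \emph{not} put $\overline z$ into $A$ unless you already know $G-u\in A$, which is precisely the point at issue. Your final sentence about ``absorbing overlap errors into bounded holomorphic coefficients of the powers of $\overline f$'' is a statement of what you would like to be true, not an argument for it. There are also earlier problems: nothing in the hypothesis makes $\Omega$ bounded, so $z$ need not lie in $H^\infty(\Omega)$ and the Stone--Weierstrass opening move is unavailable; and Runge approximation is only on compacta, so it does not reduce ``$\overline h\in A$ for all $h\in H^\infty$'' to ``$\overline z\in A$'' in the uniform topology.

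The device that makes the patching work in the paper (and in Bishop's original proof) is to place the partition of unity in the \emph{target}, not the source. For each $\lambda\in\overline{f(\Omega)}$ one produces $G_\lambda\in H^\infty(\Omega)$ with $|G_\lambda-g|\le M_\lambda|f-\lambda|$, then covers $\overline{f(\Omega)}$ by finitely many balls $B(\lambda^j,\varepsilon M_{\lambda^j}^{-1})$ with a subordinate partition $\{\chi_j\}$, and approximates $g$ by $\sum_j G_{\lambda^j}\,\chi_j\!\circ\! f$. The point is that $\chi_j\!\circ\! f$ is a continuous function of $f$ and $\overline f$ alone, hence uniformly approximable by polynomials in $f,\overline f$ by the ordinary Stone--Weierstrass theorem on $\overline{f(\Omega)}$; so each summand already lies in the closure of $H^\infty(\Omega)[\overline f]$. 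This sidesteps entirely the Cousin-type reassembly you identify as the obstacle, and it does not require first proving $\overline z\in A$.
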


In the same paper, Christophe Bishop also proved a stronger approximation 
result, \cite[Theorem 1.1]{Bishop89}, on a more restrictive class of domains 
on which $\overline{f}$ is only assumed to be a non-holomorphic harmonic 
function. Such a result for the unit disc goes back to  Sheldon Axler and 
Allen Shields \cite{AxlerShields87}.  Recently, Guangfu Cao gave an incorrect 
statement \cite[Theorem 5]{Cao08} in an attempt to give a higher dimensional 
version of Bishop's Theorem. Alexander Izzo and Bo Li \cite[pg 246]{IzzoLi13} 
noticed that the statement is incorrect. H{\aa}kan  Samuelsson and  
Erlend Wold in 	\cite[Theorem 1.3]{SamuelssonWold12} proved a partial 
extension of Bishop's Theorem for pluriharmonic functions and  $C^1$-smooth 
polynomially convex domains in $\C^n$.

This article is motivated by these papers and is an attempt to contribute  
an approximation theorem akin to Bishop's Theorem on domains in $\C^n$. 
We are not able to generalize Bishop's theorem to $\C^n$ and this is 
still an open problem. However, we prove approximation results under some 
restrictions on the functions and the domains. Furthermore, we apply our 
results to prove a  version of the Axler-\v{C}u\v{c}kovi\'c-Rao 
Theorem \cite{AxlerCuckovicRao00} in higher dimensions.

To present our first result we need to make some definitions. 
Let $\D\subset \C^n$ be a pseudoconvex domain and $CL^{\infty}_{(0,q)}(\D)$ 
denote the set of $(0,q)$-forms with coefficient functions 
that are $C^{\infty}$-smooth and bounded on $\D$. That is, 
$CL^{\infty}_{(0,q)}(\D)= L^{\infty}_{(0,q)}(\D)\cap C^{\infty}_{(0,q)}(\D)$. 
We call $\D$ a \textit{$L^{\infty}$-pseudoconvex}  domain if for 
$1\leq q\leq n$,  and $f\in CL^{\infty}_{(0,q)}(\D)$  such that $\dbar f=0$ 
there exists $g\in L^{\infty}_{(0,q-1)}(\D)$ such that $\dbar g=f$. 

The class of $L^{\infty}$-pseudoconvex domains include the products of 
$C^2$-smooth bounded strongly pseudoconvex domains 
\cite{SergeevHenkin80}, smooth bounded pseudoconvex finite 
type domains in $\C^2$ \cite{Range90}, smooth bounded finite type 
convex domains in $\C^n$ \cite{DiederichFischerFornaess99}, and 
some infinite type smooth bounded convex domains in $\C^2$  
\cite{FornaessLeeYuan11}.

Given a holomorphic mapping $f:\D\to \C^m$ 
(where $\D\subset \C^n$) and $\lambda\in \C^m$, we denote  the union 
of all non-isolated points of $f^{-1}(\lambda)$ by $\D_{f,\lambda}$. Since 
$f^{-1}(\lambda)$ is a complex subvariety of $\D$ (for $\lambda$ in the 
range of $f$), it follows that $\D_{f,\lambda}$ is the union of all positive 
dimensional connected components of $f^{-1}(\lambda)$. In the case 
$f$ extends smoothly up to the boundary of $\D,$ we define 
$\D'_{f,\lambda}$ to be the union of all non-isolated points of 
$f^{-1}(\lambda)$ within $\Dc$.  Clearly 
$\D'_{f,\lambda}\subset \D_{f,\lambda}\cup b\D$ where $b\D$ denotes 
the boundary  of $\D$. We define  
 \[\D_f=\bigcup_{\lambda\in \C^m}\D_{f,\lambda}.\]
It is clear that $\Omega_f$ is a subset of the set where the Jacobian 
of $f$ has rank strictly less than $n$. 

Now we are ready to present our first approximation result.

\begin{theorem}\label{Thm1}
Let $\D$ be a bounded $L^{\infty}$-pseudoconvex domain in 
$\C^n$ and $f_j\in H^{\infty}(\D)$ for $j=1,\ldots,m$. Assume that  
$g\in C(\overline{\D})$ such that $g|_{b\D\cup \Omega_f}=0$ where 
$f=(f_1,\ldots,f_m)$. Then $g$ belongs to the closure of 
 $H^{\infty}(\D)[\overline{f_1},\cdots, \overline{f_m}]$ in $L^{\infty}(\D)$. 
\end{theorem}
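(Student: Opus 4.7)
The plan is to approximate $g$ in $L^\infty$ by elements of $\mathcal{A}:=H^\infty(\Omega)[\overline{f_1},\ldots,\overline{f_m}]$ in three steps: reduce to a smooth compactly supported function, construct a local polynomial approximant glued by a partition of unity, and correct the coefficients to be holomorphic via the $L^\infty$-$\overline\partial$ solvability supplied by the hypothesis on $\Omega$.

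Fix $\varepsilon>0$. Since $g\in C(\overline\Omega)$ vanishes on the closed set $b\Omega\cup\Omega_f$, choose a neighborhood $V$ of $b\Omega\cup\Omega_f$ with $|g|<\varepsilon/3$ on $V\cap\Omega$, multiply $g$ by a smooth cutoff that vanishes on a smaller neighborhood of $b\Omega\cup\Omega_f$, and mollify to obtain $\tilde g\in C_c^\infty(\Omega\setminus\Omega_f)$ with $\|g-\tilde g\|_\infty<\varepsilon/3$. Set $K:=\mathrm{supp}(\tilde g)$. Since $p\notin\Omega_f$ for every $p\in K$, the fiber $f^{-1}(f(p))$ is discrete at $p$, so $K$ is covered by finitely many open sets $U_1,\ldots,U_N$ on each of which $f$ is a finite holomorphic map. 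On each $U_j$ one approximates $\tilde g|_{U_j}$ by a polynomial $P_j(z,\overline{f(z)})$ in $\overline f$ with bounded holomorphic coefficients in $z$; at generic points this is a Taylor expansion of $\tilde g$ in the anti-holomorphic directions provided by $\overline f$, and at isolated points where $f$ drops rank one exploits the finite-map structure to realize any fiber value by an $H^\infty$ interpolant. A subordinate partition of unity $\{\chi_j\}$ glues these into
\[
P(z)=\sum_{j=1}^{N}\chi_j(z)\,P_j(z,\overline{f(z)})=\sum_\alpha h_\alpha(z)\,\overline{f(z)}^{\,\alpha}
\]
with bounded smooth $h_\alpha$ and $\|\tilde g-P\|_\infty<\varepsilon/3$.

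To convert each $h_\alpha$ into a bounded holomorphic function, invoke the $L^\infty$-pseudoconvexity of $\Omega$: the $(0,1)$-form $\overline\partial h_\alpha$ is bounded, smooth, and $\overline\partial$-closed, so there exists $u_\alpha\in L^\infty(\Omega)$ with $\overline\partial u_\alpha=\overline\partial h_\alpha$; then $h_\alpha-u_\alpha\in H^\infty(\Omega)$ and the desired candidate is $\sum_\alpha(h_\alpha-u_\alpha)\overline{f}^{\,\alpha}\in\mathcal{A}$.

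The main obstacle is controlling $\|\sum_\alpha u_\alpha\overline{f}^{\,\alpha}\|_\infty<\varepsilon/3$: a naive bound on $u_\alpha$ in terms of $\|\overline\partial h_\alpha\|_\infty$ can blow up as the cover is refined. This must be circumvented by arranging the local approximants $P_j$ to agree to sufficient order on overlaps so that $\overline\partial P=\sum_\alpha(\overline\partial h_\alpha)\,\overline{f}^{\,\alpha}$ is globally small in $L^\infty$, and then applying a single $\overline\partial$-correction $\overline\partial u=\overline\partial P$ with the $L^\infty$-estimate; this is a Cousin-type compatibility for $\overline f$. A secondary difficulty is the local approximation at isolated but critical fibers of $f$ (e.g., $f(z)=z^2$ at $0$), where $\overline f$ does not furnish smooth real coordinates and a symmetrization or Weierstrass-preparation argument based on the finite-map structure is required.
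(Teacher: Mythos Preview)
Your plan has a genuine gap precisely at the two places you flag as ``obstacles,'' and neither is resolved by the sketch. The first obstacle is decisive: once you glue with a partition of unity $\{\chi_j\}$ living on $\Omega$, the coefficients $h_\alpha=\sum_j\chi_j\,(\text{coeff}_\alpha P_j)$ acquire $\overline\partial$-data proportional to $\overline\partial\chi_j$, and the $L^\infty$ solution operator for $\overline\partial$ gives no smallness for $u_\alpha$; arranging ``compatibility to sufficient order on overlaps'' so that $\overline\partial P$ is globally small is essentially the whole theorem and is not something one can read off from local Taylor expansions. The second obstacle (approximating $\tilde g$ near isolated critical fibers of $f$ by polynomials in $\overline f$ with $H^\infty$ coefficients) is also nontrivial and is not handled by a vague appeal to ``finite-map structure.''

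The paper avoids both problems simultaneously by two choices you did not make. First, the partition of unity is taken on the \emph{target} $\overline{f(\Omega)}\subset\mathbb{C}^m$ and pulled back: the gluing functions are $\chi_j\circ f$, which by Stone--Weierstrass already lie in the closure of $\mathbb{C}[f,\overline f]\subset\mathcal{A}$, so no $\overline\partial$-correction of the glue is ever needed. Second, the local approximant attached to a point $\lambda\in\mathbb{C}^m$ is not a polynomial in $\overline f$ but a single function $G_\lambda\in H^\infty(\Omega)$: one smooths $g$ to $g^\lambda$ with $\operatorname{supp}(\overline\partial g^\lambda)$ disjoint from $b\Omega\cup f^{-1}(\lambda)$, and then uses the $\overline\partial$-Koszul complex (Lemma~\ref{Lem2}) to write $\overline\partial g^\lambda=\sum_l(f_l-\lambda_l)\,\overline\partial H_l^\lambda$ with $H_l^\lambda\in L^\infty$, so that $G_\lambda:=g^\lambda-\sum_l(f_l-\lambda_l)H_l^\lambda$ is bounded holomorphic and satisfies $|G_\lambda-g^\lambda|\le M_\lambda|f-\lambda|$. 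This estimate is exactly what makes the target-side partition of unity work: on $f^{-1}(B(\lambda,\varepsilon/M_\lambda))$ the error is $\le\varepsilon$. In short, the missing idea in your plan is to push the partition of unity to the image of $f$ and to use the Koszul machinery (not a Taylor expansion in $\overline f$) to produce an $H^\infty$ approximant adapted to each fiber; with those two changes the $\overline\partial$-correction step and the critical-fiber difficulty both disappear.
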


Theorem \ref{Thm1} and \cite[Theorem 4.2]{IzzoLi13} lead to the 
following corollary.

\begin{corollary}\label{CorLp}
Let $\D$ be a bounded $L^{\infty}$-pseudoconvex domain in $\C^n$ 
and $f_j\in H^{\infty}(\D)$ for $j=1,\ldots, m$ and $n\leq m$. 
Then the following are equivalent.
\begin{itemize}
\item [i.] $H^{\infty}(\D)[\overline{f_1},\ldots, \overline{f_m}]$ is dense in 
$L^p(\D)$ for all $0< p<\infty$,
\item [ii.] $H^{\infty}(\D)[\overline{f_1},\ldots, \overline{f_m}]$ is dense in 
$L^p(\D)$ for some $1\leq p<\infty$,
\item [iii.] the Jacobian of 
$f=(f_1,\ldots, f_m)$ has rank $n$  for some $z\in \D.$ 	
\end{itemize}
\end{corollary}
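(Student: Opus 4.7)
The implication (i)$\Rightarrow$(ii) is immediate, so the plan is to establish (iii)$\Rightarrow$(i) by invoking Theorem~\ref{Thm1}, and (ii)$\Rightarrow$(iii) by invoking \cite[Theorem 4.2]{IzzoLi13}, which the authors explicitly flag as the companion ingredient for this corollary. Thus the real content of the argument is (iii)$\Rightarrow$(i).

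For (iii)$\Rightarrow$(i), I would first argue that the set $E\subset\Omega$ on which the Jacobian of $f$ has rank strictly less than $n$ is a complex analytic subvariety of $\Omega$, since it is cut out by the vanishing of all $n\times n$ minors of the Jacobian matrix (which are holomorphic functions on $\Omega$ because $m\ge n$). By hypothesis (iii), $E$ is a proper analytic subset of the connected components of $\Omega$ that it meets, hence has Lebesgue measure zero. Since $\Omega_f\subseteq E$ as noted in the excerpt, and $b\Omega$ also has measure zero, the set $b\Omega\cup\Omega_f$ is null in $\overline{\Omega}$.

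Next, I would use a standard density argument: continuous functions on $\overline{\Omega}$ that vanish on the closed set $b\Omega\cup\overline{\Omega_f}$ (in particular on $b\Omega\cup\Omega_f$) are dense in $L^p(\Omega)$ for every $0<p<\infty$, since one can approximate any $L^p$ function by compactly supported continuous functions off a neighborhood of this null set, using Lusin's theorem together with a cutoff. For any such $g$, Theorem~\ref{Thm1} produces a sequence in $H^{\infty}(\Omega)[\overline{f_1},\ldots,\overline{f_m}]$ converging to $g$ in $L^{\infty}(\Omega)$; because $\Omega$ is bounded, $L^\infty$ convergence implies $L^p$ convergence for every $0<p<\infty$, completing (iii)$\Rightarrow$(i).

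The only step that is not essentially formal is (ii)$\Rightarrow$(iii), and the expected hurdle is to verify that the cited \cite[Theorem 4.2]{IzzoLi13} is applicable in the present setup: it should yield that $L^p$-density of an algebra of the form $H^{\infty}(\Omega)[\overline{f_1},\ldots,\overline{f_m}]$ forces the Jacobian of $f$ to attain rank $n$ somewhere in $\Omega$ (this is where the dimension hypothesis $n\le m$ is used). Granting that theorem, the contrapositive is immediate, and combined with the trivial (i)$\Rightarrow$(ii) and the argument above for (iii)$\Rightarrow$(i), the three conditions are equivalent.
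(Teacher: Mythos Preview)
Your proposal matches the paper's proof in structure and content for (i)$\Rightarrow$(ii) and for (iii)$\Rightarrow$(i); the paper likewise observes that the rank-deficient set is a proper analytic subvariety of measure zero, that compactly supported smooth functions vanishing there are dense in every $L^p$, and then applies Theorem~\ref{Thm1} together with $L^{\infty}\hookrightarrow L^p$ on a bounded domain.

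The one place where the paper supplies more than you do is (ii)$\Rightarrow$(iii). The hurdle you flag is real: \cite[Theorem~4.2]{IzzoLi13} applies to algebras generated by finitely many $C^1$ functions, whereas $H^{\infty}(\Omega)$ on a general $L^{\infty}$-pseudoconvex domain is not of that form. The paper resolves this by first restricting to a ball $B$ with $\overline{B}\subset\Omega$. Density of $H^{\infty}(\Omega)[\overline{f_1},\ldots,\overline{f_m}]$ in $L^p(\Omega)$ passes to density of $H^{\infty}(B)[\overline{f_1},\ldots,\overline{f_m}]$ in $L^p(B)$, and on the ball $H^{\infty}(B)$ is the uniform closure of polynomials in $z_1,\ldots,z_n$, so the Izzo--Li result applies to the finite smooth generating set $\{z_1,\ldots,z_n,\overline{f_1},\ldots,\overline{f_m}\}$. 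Its conclusion is that the real Jacobian of this collection has full rank on a dense open subset of $B$, which amounts to $J_f$ having complex rank $n$ there; the identity principle then propagates this to $\Omega$. With this reduction in hand, your outline becomes a complete proof identical to the paper's.
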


To formulate our next result we will need the following notation. The 
set of holomorphic functions on $\D$ that have smooth extensions up 
to the boundary is denoted by $A^{\infty}(\D)$. Given a compact set 
$K\subset \overline{\D},$ we will denote by $A_{\Dc}(K)$ the norm closed 
subalgebra of continuous functions on $K$ spanned by restrictions of 
$A^{\infty}(U\cap \Omega)$ onto $K$, where $U$ runs through open 
neighborhoods of $K.$

\begin{theorem}\label{Thm2}
Let $\D$ be a smooth bounded pseudoconvex domain in $\C^n$ and  
$f_j\in A^{\infty}(\D)$ for $j=1,\ldots,m$. Then $g\in C(\Dc)$ belongs to 
the closure of $A^{\infty}(\D)[\overline{f_1},\cdots, \overline{f_m}]$  in 
$L^{\infty}(\D)$ if and only if for any $\lambda$ in the range  of 
$f=(f_1,\ldots,f_m)$ we have 
$g|_{\D'_{f,\lambda}}\in A_{\Dc}(\D'_{f,\lambda})$.
\end{theorem}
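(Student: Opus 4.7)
\emph{Necessity.} This direction is routine. Every element of $A^\infty(\Omega)[\overline{f_1},\ldots,\overline{f_m}]$ extends continuously to $\overline{\Omega}$, and $g \in C(\overline{\Omega})$, so $L^\infty(\Omega)$-convergence of such approximants to $g$ is automatically uniform convergence on $\overline{\Omega}$. Fix $\lambda$ in the range of $f$; the restriction of each approximant to $\Omega'_{f,\lambda}$ turns each $\overline{f_j}$ into the constant $\overline{\lambda_j}$, hence lies in $A^\infty(\Omega)|_{\Omega'_{f,\lambda}} \subset A_{\overline{\Omega}}(\Omega'_{f,\lambda})$. Since the latter is closed in the uniform norm on $\Omega'_{f,\lambda}$, the limit $g|_{\Omega'_{f,\lambda}}$ belongs to it.

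\emph{Sufficiency: the framework.} My plan is to apply E.~Bishop's antisymmetric decomposition theorem. Let $\mathcal{A}$ denote the uniform closure of $A^\infty(\Omega)[\overline{f_1},\ldots,\overline{f_m}]$ in $C(\overline{\Omega})$; since $L^\infty$-approximation of a continuous function by continuous ones is the same as uniform approximation on $\overline{\Omega}$, it is enough to prove $g \in \mathcal{A}$. Because $f_j$ and $\overline{f_j}$ both lie in $\mathcal{A}$, so do $\mathrm{Re}\, f_j$ and $\mathrm{Im}\, f_j$, and therefore every maximal antisymmetric set $K$ for $\mathcal{A}$ is contained in a single fiber $f^{-1}(\lambda) \cap \overline{\Omega}$; a connectedness argument identifies $K$ with a connected component of that fiber. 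For singleton components the Bishop criterion $g|_K \in \mathcal{A}|_K$ is vacuous, so the work is confined to positive-dimensional components, each of which lies inside $\Omega'_{f,\lambda}$. On any such $K$ the function $\overline{f_j}|_K$ equals the constant $\overline{\lambda_j}$, so $\mathcal{A}|_K$ coincides with the uniform closure of $A^\infty(\Omega)|_K$ in $C(K)$.

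\emph{Main step and obstacle.} The hypothesis supplies $g|_K \in A_{\overline{\Omega}}(\Omega'_{f,\lambda})|_K \subseteq A_{\overline{\Omega}}(K)$, whereas Bishop's criterion requires $g|_K$ to be a uniform limit on $K$ of elements of $A^\infty(\Omega)|_K$. The heart of the argument is therefore a local-to-global Runge-type identity $A_{\overline{\Omega}}(K) = \overline{A^\infty(\Omega)|_K}$. Given $h \in A^\infty(U \cap \Omega)$ for $U$ a neighborhood of $K$, I would choose a smooth cutoff $\chi$ equal to $1$ near $K$ and supported in $U$, and then invoke Kohn's global regularity of the $\overline{\partial}$-Neumann problem on the smooth bounded pseudoconvex domain $\Omega$ to obtain $v \in C^\infty(\overline{\Omega})$ with $\overline{\partial}v = h\,\overline{\partial}\chi$; the difference $H := \chi h - v$ is then in $A^\infty(\Omega)$ and agrees with $h$ on $K$ up to the smooth error $v|_K$. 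The hard part will be eliminating this error in the sup norm on $K$, since $v|_K$ need not a priori lie in $A_{\overline{\Omega}}(K)$ and a naive iteration does not converge. My intended remedy is to exploit that $K$ is cut out by the vanishing of $f-\lambda$ and to absorb $v|_K$ into a holomorphic correction coming from the $A^\infty(\Omega)$-ideal generated by $f_1 - \lambda_1,\ldots,f_m - \lambda_m$, using a Skoda/Koszul-type division that preserves boundary regularity. Once this approximation identity is in place, Bishop's decomposition theorem yields $g \in \mathcal{A}$ and the theorem follows.
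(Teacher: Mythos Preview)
Your approach via Bishop's antisymmetric decomposition is genuinely different from the paper's, which never invokes that theorem. The paper argues by direct construction: for each $\lambda$ and each $\varepsilon>0$ it uses the hypothesis $g|_{\Omega'_{f,\lambda}}\in A_{\overline{\Omega}}(\Omega'_{f,\lambda})$ (together with trivial local adjustments near the isolated points of $f^{-1}(\lambda)$) to produce $g^{\lambda}\in C^{\infty}(\overline{\Omega})$ with $\sup_{\overline{\Omega}}|g-g^{\lambda}|<\varepsilon$ and $\mathrm{supp}(\overline{\partial} g^{\lambda})\cap f^{-1}(\lambda)=\emptyset$. A $\overline{\partial}$-Koszul lemma (Lemma~\ref{Lem2.1}, itself proved from Kohn's global regularity) then yields $H_l^{\lambda}\in C^{\infty}(\overline{\Omega})$ with $\overline{\partial} g^{\lambda}=\sum_l (f_l-\lambda_l)\,\overline{\partial} H_l^{\lambda}$, so $G_{\lambda}:=g^{\lambda}-\sum_l (f_l-\lambda_l)H_l^{\lambda}\in A^{\infty}(\Omega)$ and $|G_{\lambda}-g|\le M_{\lambda}|f-\lambda|+\varepsilon$. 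A finite cover of $\overline{f(\Omega)}$ by balls $B(\lambda^{j},\varepsilon M_{\lambda^{j}}^{-1})$ and a partition of unity $\{\chi_j\}$ in the \emph{target} give $\sum_j G_{\lambda^{j}}\chi_j(f)$ within $2\varepsilon$ of $g$, and Stone--Weierstrass pushes each $\chi_j(f)$ into $\mathbb{C}[f,\overline{f}]$.

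Your proposal has a concrete gap precisely at the step you flagged. The Runge-type identity $A_{\overline{\Omega}}(K)=\overline{A^{\infty}(\Omega)|_K}$ carries the entire weight of the argument, and the remedy you sketch does not go through as written: a Koszul/Skoda division with generators $f_1-\lambda_1,\ldots,f_m-\lambda_m$ requires the $\overline{\partial}$-data $h\,\overline{\partial}\chi$ to be supported away from the \emph{whole} fiber $f^{-1}(\lambda)$, whereas your cutoff $\chi$ is equal to $1$ only near the single component $K$. If $f^{-1}(\lambda)$ has other components, $\mathrm{supp}(h\,\overline{\partial}\chi)$ may meet them and the division is unavailable; equivalently, your assertion that ``$K$ is cut out by the vanishing of $f-\lambda$'' is false in general---only $f^{-1}(\lambda)$ is. (A looser end: the identification of maximal antisymmetric sets with connected components of the fibers is asserted via ``a connectedness argument'' but is not automatic.) Note that the paper applies the Koszul step directly to $g$, not to a local $h$, and precisely for this reason can work at the level of the full fiber: the hypothesis on $\Omega'_{f,\lambda}$ plus easy local fixes at isolated points give $\overline{\partial} g^{\lambda}$ supported away from all of $f^{-1}(\lambda)$. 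If you try to rescue the antisymmetric-set route, you will in effect be reproducing this fiberwise construction anyway.
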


Alexander Izzo in \cite[Theorem 1.3]{Izzo11} proved (among other things) 
the following interesting result. 
\begin{theorem*}[Izzo]
Let $A$ be a uniform algebra on a compact Hausdorff space $X$ whose 
maximal ideal space is $X$ and $E\subset X$ be a closed subset such that 
$X\setminus E$ is an $m$-dimensional manifold. Assume that 
\begin{itemize}
\item[i.] for any $p\in X\setminus E$ there exists $f_1,\cdots, f_m\in A$ 
that are $C^1$-smooth on $X\setminus E$ and 
$df_1\wedge\cdots\wedge df_m(p)\neq 0,$
\item[ii.] the functions in $A$ that are  $C^1$-smooth on $X\setminus E$ 
separate points on $X.$
\end{itemize}  
Then $A=\lbrace g\in C(X):g|E\in A|E\rbrace.$ 	
\end{theorem*}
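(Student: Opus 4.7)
My plan is to recast the equality $A = \{g \in C(X) : g|_E \in A|_E\}$ as a statement about annihilating measures. The inclusion $A \subseteq \{g \in C(X) : g|_E \in A|_E\}$ is immediate. For the reverse, given $g \in C(X)$ with $g|_E \in A|_E$, pick $\phi \in A$ agreeing with $g$ on $E$; then $g - \phi$ vanishes on $E$, so it suffices to show that every $h \in C(X)$ with $h|_E = 0$ lies in $A$. By Hahn--Banach duality this reduces to proving that every regular complex Borel measure $\mu$ on $X$ with $\mu \perp A$ is supported in $E$.

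The natural strategy is to show that each $p \in X \setminus E$ is a peak point for $A$: then $p$ is a one-point Gleason part, and the standard decomposition of annihilating measures over Gleason parts gives $\mu|_{X \setminus E} = 0$. To construct a peak function at $p$, I would use (i) to select $f_1, \ldots, f_m \in A$ with $df_1 \wedge \cdots \wedge df_m(p) \neq 0$, normalize so $f_j(p) = 0$, and note that the real and imaginary parts of the $f_j$ produce real-valued functions on $X \setminus E$ whose differentials span the real cotangent space at $p$. A complex-linear combination $f = \sum_j c_j f_j \in A$ can then be chosen so that $\operatorname{Re}(f)$ attains a strict local maximum of $0$ at $p$ on a neighborhood $U$ in the manifold $X \setminus E$. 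Applying the holomorphic functional calculus (available because the maximal ideal space of $A$ is $X$) to obtain $e^{tf} \in A$ converts this local maximum into a local peak. Hypothesis (ii) supplies smooth elements of $A$ separating $p$ from any point of $X \setminus \{p\}$; a compactness argument on $X \setminus U$ should then combine finitely many such functions with the local peak to upgrade it to a global peak function.

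The main obstacle I anticipate is this local-to-global upgrade. The functions furnished by (i) are only $C^1$-smooth on $X \setminus E$, so their behavior on $E$ is not directly controlled, and the strict maximum of $\operatorname{Re}(f)$ at $p$ might well fail on $E$ or far from $p$ in $X \setminus E$. Overcoming this requires delicately weaving the differential information at $p$ against the global point-separation furnished by (ii); a clean execution would likely either invoke a variant of Rossi's local peak set theorem, or build the global peak function by a resolvent-style construction applied to a sum of squared moduli of separating functions from (ii) together with the local peak piece. Only once the peak-point property is established globally does the Gleason-part decomposition deliver the desired support statement for $\mu$, and hence the theorem.
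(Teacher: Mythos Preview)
This theorem is not proved in the paper; it is quoted from \cite{Izzo11} as background, to indicate how a weaker form of Theorem~\ref{Thm1} could be extracted from the existing literature. There is therefore no proof in the paper to compare your proposal against.

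Independently of that, your outline has a real gap at the step where you pass from ``every $p\in X\setminus E$ is a peak point for $A$'' to ``every annihilating measure $\mu\perp A$ is carried by $E$.'' Knowing that $p$ is a peak point gives only $\mu(\{p\})=0$; it does not force $|\mu|(X\setminus E)=0$. The ``decomposition of annihilating measures over Gleason parts'' you invoke does not supply the missing implication: Cole's counterexample to the peak-point conjecture produces a uniform algebra $A\subsetneq C(X)$ on its own maximal ideal space in which \emph{every} point of $X$ is a peak point (hence a one-point Gleason part), yet nonzero annihilating measures exist. So even the strongest possible peak-point conclusion cannot, by abstract Gleason-part reasoning alone, force $\mu$ to live on $E$.

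Izzo's actual proof in \cite{Izzo11} exploits the manifold and differential hypotheses far more directly than your sketch does: the smooth coordinate functions coming from (i), together with the separation in (ii), are used in a localization argument in the spirit of the H\"ormander--Wermer and Weinstock approximation techniques, rather than via abstract peak-point/Gleason-part theory. If you want to push your approach through, you would need to replace the Gleason-part step by an argument that genuinely uses the local coordinate structure on $X\setminus E$ to kill $\mu$ there; the peak-point property by itself is not enough.
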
 

As pointed out to us by Alexander Izzo, a result along the lines of Theorem \ref{Thm1} 
(for a similar class of domains) can be obtained from \cite{Izzo11} as follows. 
Let us take $X$ to be the maximal ideal space (spectrum) of 
$H^{\infty}(\Omega)$ and $X\setminus E$ to be the set of points in 
$\Omega$ where the Jacobian of $f$ has rank $n$ with $A$ being 
the closure of $H^{\infty}(\D)[\overline{f_1},\cdots, \overline{f_m}]$. 
Then one obtains  Theorem \ref{Thm1}  if the set  $\D_f$   is replaced 
by the set of points  where $J_f$, the Jacobian of $f$, has rank strictly  
less than $n$ (usually a larger set than $\D_f$). 

%In case  the rank  of $J_f$ is $n$ for some $z\in \D$, the set of points 
%at which $J_f$ has rank strictly less than $n$  is a closed set of 
%measure 0 \cite[Theorem 3.7]{RangeBook}. \red{In that case,} 
%one can show that 
%	\red{the set of functions $g\in C^{\infty}_0(\D)$, (REPLACE WITH 
%		``the set of smooth functions with compact support in $\D$"}  
%that vanish where $J_f$ has 
%rank strictly less than $n$, are dense in $L^p(\D)$ for all $0< p<\infty$.  
%Furthermore, \cite[Theorem 4.2]{IzzoLi13} implies that (adopting 
%to our set-up) for a bounded domain $\D\subset \C^n$ if the algebra 
%generated by  \red{$\{f_1,\ldots, f_m\}\subset C^{\infty}(\D)$} is dense in $L^p(\D)$ 
%for some $1\leq p<\infty$, then the Jacobian of $\{f_1,\ldots, f_m\}$ 
%is of full rank on an open dense set in $\D$. We note that the original 
%result of Alexander Izzo and Bo Li is stated for smooth functions and Riemannian 
%manifolds-with-boundary. Therefore, 

Next we will present our generalization of the Axler-\v{C}u\v{c}kovi\'c-Rao 
Theorem to $\C^n$, but first we will state the commuting problem for 
Toeplitz operators. 

Let $A^2(\D)$ denote the space of square integrable holomorphic 
functions on $\D$ and $P:L^2(\D)\to A^2(\D)$ be the Bergman projection, 
the orthogonal projection onto $A^2(\D)$. For $g\in L^{\infty}(\D)$, 
the Toeplitz operator $T_g:A^2(\D)\to A^2(\D)$ is defined as 
$T_gf=P(gf)$ for all $f\in A^2(\D)$. 

The \textit{commuting problem} can be stated as follows: Let  
$\phi$ be a non-constant bounded function on $\D$. Determine all 
$\psi \in L^{\infty}(\D)$ such that $[T_{\phi}, T_{\psi}] = 0$.

The commuting problem was solved by Arlen Brown and Paul Halmos on the Hardy 
space of the unit disc in a famous paper \cite{BrownHalmos64}. However, on 
the Bergman space, the problem is still open. Many partial answers has been 
obtained over the years. To list a few, we refer the reader to 
\cite{AxlerCuckovic91,CuckovicRao98,AxlerCuckovicRao00, LeTikaradze17} for 
results over the unit disc; to \cite{Zheng98,Le08,Le17} for results over the ball 
in $\C^n$;  and to \cite{BauerLe11,ChoeYang14,AppuhamyLe16} for results on 
Fock spaces. 

In this paper, we want to highlight the following result of Sheldon Axler, 
\v{Z}eljko \v{C}u\v{c}kovi\'c, and Nagisetti Rao (see \cite{AxlerCuckovicRao00}). 
	
\begin{theorem*}[Axler-\v{C}u\v{c}kovi\'c-Rao]
Let $\D$ be a bounded domain in $\C$ and $\phi$ be a nonconstant 
bounded holomorphic  function on $\D$. Assume that $\psi$ is a 
bounded measurable function on $\D$ such that $T_{\phi}$  and 
$T_{\psi}$ commute. Then $\psi$ is holomorphic.
\end{theorem*}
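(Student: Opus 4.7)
The plan is to translate the commutation relation $[T_\phi,T_\psi]=0$ into an $L^2$-orthogonality statement and then apply Bishop's theorem (the first result quoted in the paper) to force $\psi$ into $A^2(\Omega)$.

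First I would exploit the fact that $\phi\in H^{\infty}(\Omega)$ makes $T_\phi$ act on $A^2(\Omega)$ by pure multiplication, so $T_\phi h=\phi h$ and in particular $T_{\phi^n}=T_\phi^n$. Writing out $T_\phi T_\psi f=T_\psi T_\phi f$ for an arbitrary $f\in A^2(\Omega)$ yields $\phi P(\psi f)=P(\phi\psi f)$, equivalently
\[
\phi\bigl(\psi f-P(\psi f)\bigr)\;\perp\;A^2(\Omega).
\]
Since $T_\phi$ and $T_\psi$ commute, so do $T_{\phi^n}=T_\phi^n$ and $T_\psi$ for every $n\ge 0$, and repeating the calculation with $\phi^n$ in place of $\phi$ gives $\phi^n\bigl(\psi f-P(\psi f)\bigr)\perp A^2(\Omega)$. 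Moving $\phi^n$ across the inner product, this reads
\[
\psi f-P(\psi f)\;\perp\;\overline{\phi}^{\,n}\cdot A^2(\Omega)\quad\text{for every }n\ge 0.
\]
Combined with the tautological orthogonality $\psi f-P(\psi f)\perp A^2(\Omega)$, the difference $\psi f-P(\psi f)$ is $L^2$-orthogonal to the entire subalgebra $H^{\infty}(\Omega)[\overline{\phi}]$.

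Next I would invoke Bishop's theorem: because $\phi$ is bounded holomorphic and nonconstant on each connected component of $\Omega$, $H^{\infty}(\Omega)[\overline{\phi}]$ is uniformly dense in $C(\overline{\Omega})$, and since $\Omega$ is bounded this in turn is $L^2$-dense in $L^2(\Omega)$. Consequently $\psi f-P(\psi f)=0$ in $L^2(\Omega)$ for every $f\in A^2(\Omega)$. Taking $f\equiv 1\in A^2(\Omega)$ gives $\psi\in A^2(\Omega)\cap L^{\infty}(\Omega)$, so $\psi$ agrees a.e.\ with a bounded holomorphic function, which is the conclusion.

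The main subtlety is the hypothesis needed for Bishop's theorem, namely nonconstancy of $\phi$ on every connected component of $\Omega$. If $\phi$ is constant on some component $U$, then $T_\phi$ restricted to that component is a scalar and the commutation relation imposes no constraint on $\psi|_U$ at all; I would handle this by decomposing $\Omega$ into its components at the outset and running the density argument only on those components where $\phi$ is nonconstant. Beyond this, everything is a clean translation from operator theory to approximation theory, with the Axler--Shields--Bishop machinery doing the real work.
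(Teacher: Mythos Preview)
The paper does not give its own proof of this statement --- the Axler--\v{C}u\v{c}kovi\'c--Rao theorem is quoted as a known result from \cite{AxlerCuckovicRao00}. What the paper does prove is its higher-dimensional generalization, Corollary~\ref{CorACR}, and your argument is essentially the one-variable specialization of that proof: translate the commutation relation into the orthogonality $\langle T_\psi(1)-\psi,\overline{\phi}^{\,n} h\rangle=0$ for all $n\ge 0$ and $h\in A^2(\Omega)$, then invoke an approximation/density result (Bishop's theorem in your case, Corollary~\ref{CorLp} in the paper) to conclude $\psi=T_\psi(1)\in A^2(\Omega)$. The only cosmetic difference is that the paper applies the commutation identity directly to the constant function $1$, whereas you carry a general test function $f\in A^2(\Omega)$ and specialize to $f=1$ at the end; both routes yield the same orthogonality relation. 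Your remark about components is correct and worth noting: Bishop's theorem requires $\phi$ nonconstant on \emph{every} component, and the theorem as literally stated can fail on components where $\phi$ is constant, so the statement implicitly assumes $\Omega$ connected (or nonconstancy componentwise).
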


As an application of our results, we get the following generalization 
of the Axler-\v{C}u\v{c}kovi\'c-Rao Theorem. 

\begin{corollary}\label{CorACR}
Let $\D$ be a bounded $L^{\infty}$-pseudoconvex domain in $\C^n$, 
$g\in L^{\infty}(\D)$, and  $f_j\in H^{\infty}(\D)$ for $j=1,\ldots, m$ 
and $n\leq m$. Assume that the Jacobian of the function 
$f=(f_1, \ldots, f_m):\D\to \mathbb{C}^m$ has rank $n$ for some 
$z\in\D$ and  $T_g$ commutes with $T_{f_j}$ for $1\leq j\leq m$. 
Then $g$ is holomorphic.
\end{corollary}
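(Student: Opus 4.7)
The plan is to write $g = P(g) + u$ where $u := g - P(g) \in L^2(\Omega)$, and then to use the commutation to show that $u$ is orthogonal in $L^2(\Omega)$ to a subspace which Corollary~\ref{CorLp} supplies as dense. This forces $u \equiv 0$, so that $g = P(g)$ almost everywhere; as $g \in L^\infty(\Omega)$, its holomorphic representative then lies in $H^\infty(\Omega)$.

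\emph{Step 1 (unpacking the commutation).} Fix $h \in A^2(\Omega)$. Since $f_j \in H^\infty(\Omega)$, both $f_j h$ and $f_j P(gh)$ lie in $A^2(\Omega)$, so $T_{f_j} h = f_j h$ and $T_{f_j}(P(gh)) = f_j P(gh)$. Applying $T_g T_{f_j} = T_{f_j} T_g$ to $h$ therefore reduces to $P(g f_j h) = f_j P(gh)$. Iterating in $j$ (replacing $h$ by $f_k h$, again in $A^2$) and taking linear combinations gives
\[
P(g \phi h) = \phi P(gh) \qquad \text{for all } \phi \in \mathbb{C}[f_1,\dots,f_m] \text{ and } h \in A^2(\Omega).
\]
Specializing $h = 1$ and substituting $g = P(g) + u$ yields $\phi P(g) + P(\phi u) = P(g\phi) = \phi P(g)$, hence $P(\phi u) = 0$; equivalently, $\int_\Omega \phi\, u\, \overline{k}\, dA = 0$ for every $\phi \in \mathbb{C}[f_1,\dots,f_m]$ and every $k \in A^2(\Omega)$.

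\emph{Step 2 (density).} The hypotheses on $f$ are precisely those of Corollary~\ref{CorLp}, which therefore supplies density of $H^\infty(\Omega)[\overline{f_1},\dots,\overline{f_m}]$ in $L^2(\Omega)$. For a typical finite sum $v = \sum_\alpha h_\alpha \overline{f}^{\,\alpha}$ with $h_\alpha \in H^\infty(\Omega)$,
\[
\int_\Omega u\, \overline{v}\, dA = \sum_\alpha \int_\Omega u\, f^{\alpha}\, \overline{h_\alpha}\, dA.
\]
Boundedness of $\Omega$ gives $h_\alpha \in H^\infty(\Omega) \subset A^2(\Omega)$, so each summand vanishes by Step~1 (with $\phi = f^\alpha$ and $k = h_\alpha$). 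Consequently $u$ is orthogonal in $L^2(\Omega)$ to a dense subspace, so $u \equiv 0$, which finishes the proof.

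I do not foresee any substantive obstacle: the argument is essentially formal once Corollary~\ref{CorLp} is in hand, and the genuine difficulty has been absorbed into that $L^2$-approximation result. The only points requiring mild care are the iteration in Step~1 (which uses $f_j \cdot A^2(\Omega) \subset A^2(\Omega)$) and the inclusion $H^\infty(\Omega) \subset A^2(\Omega)$, which follows from boundedness of $\Omega$.
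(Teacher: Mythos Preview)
Your proof is correct and follows essentially the same route as the paper: both show that $g - T_g(1) = g - P(g)$ is orthogonal in $L^2(\Omega)$ to every product $\overline{P(f)}\,\psi$ with $P$ a polynomial and $\psi$ holomorphic, and then invoke the $L^2$-density supplied by Corollary~\ref{CorLp} to conclude $g = T_g(1)$. Your exposition is a bit more explicit in unpacking the iteration and the decomposition $g = P(g) + u$, but the argument is identical in substance.
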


This paper is organized as follows: The next section contains relevant 
basic facts and results about $\dbar$-Koszul complex. Then we will 
present the proofs of Theorems \ref{Thm1} and \ref{Thm2}. We will 
finish the paper with the proof of  Corollaries \ref{CorLp} and \ref{CorACR}.

%%%%%%%%%%%%%%%%%%%%%%%%%%%%%%%%%%%
\section*{The $\overline{\partial}$-Koszul Complex}

Let $\D$ be a domain in $\C^n$ and $V$ be a vector space of dimension 
$m$ with a basis $\{e_1,e_2,\ldots,e_m\}$. We define 
\[\wedge^rV =\text{span}\left\{e_{j_1}\wedge e_{j_2}\wedge\cdots\wedge 
e_{j_r}:j_1<j_2<\cdots<j_r\right\}\] 
and $\Gamma^{\infty}_{(r,s)}=\wedge^rV \otimes CL^{\infty}_{(0,s)}(\D)$ 
where $r$ and $s$ are nonnegative integers. We note that throughout 
the paper we use the convention that  $\Gamma^{\infty}_{(r,s)}=\{0\}$ if 
$r\geq m+1$ or $s\geq n+1$. Finally, $CL^{\infty}_{(0,0)}(\D)=CL^{\infty}(\D)$.  

We define the  unbounded  operator 
$\dbar: \Gamma^{\infty}_{(r,s)}\to \Gamma^{\infty}_{(r,s+1)}$ as 
$\dbar(e_J\otimes W)=e_J\otimes \dbar W$ where $e_J\in \wedge^r V$ 
and $W\in CL^{\infty}_{(0,s)}(\D)$. The operator $\dbar$ is defined on 
\[Dom_{\infty}(\dbar)=\left\{f\in \Gamma^{\infty}_{(r,s)}:\dbar f\in 
\Gamma^{\infty}_{(r,s+1)}\right\}.\] 
Let $f=(f_1,\ldots,f_m):\D\to \C^m$ be a bounded holomorphic mapping. 
Then for $0\leq s\leq n$ and $0\leq r\leq m$ we define the operator  
\[\mathcal{T}_f:\Gamma^{\infty}_{(r+1,s)}\to \Gamma^{\infty}_{(r,s)}\]  
with the following properties: 
\begin{enumerate}
\item $\mathcal{T}_f(e_j\otimes W)=f_jW$,
\item $\mathcal{T}_f(A\wedge B)=\mathcal{T}_f(A)\wedge B+(-1)^{|A|_1}A\wedge 
\mathcal{T}_fB$ (here $|.|_1$ is the order of $A$ in $\cup_{r=0}^m\Lambda^rV$),  
\item $\mathcal{T}_f \dbar=\dbar \mathcal{T}_f$ on $Dom_{\infty}(\dbar)$ 
for $0\leq s\leq n$ and $0\leq r\leq m$,
 \item $\mathcal{T}_f\mathcal{T}_f=0$ and $\dbar\dbar=0$.
 \end{enumerate}

We note that  $\mathcal{T}_fW=0$ for $W\in \Gamma^{\infty}_{(0,s)}$ 
and $0\leq s\leq n$. 

\begin{lemma} \label{Lem1} 
Let $\D$ be a bounded domain in $\C^n, 0\leq s\leq n,0\leq r\leq m$, 
and  $f=(f_1,\ldots,f_m):\D\to \C^m$ be a bounded holomorphic 
mapping. Assume that $W\in \Gamma^{\infty}_{(r,s)}$ such that 
$\text{supp}(W)\subset  \D$ and $\text{supp}(W)\cap f^{-1}(0)=\emptyset$.
\begin{itemize}
 \item [i.]  If $\mathcal{T}_fW=0$, then 
there exists  $Y\in \Gamma^{\infty}_{(r+1,s)}$ such that 
\begin{itemize}
\item[a.] $\mathcal{T}_fY=W$,
\item[b.] $\text{supp}(Y)\subset \D$ and $\text{supp}(Y)\cap f^{-1}(0)=\emptyset$.
\end{itemize}
\item[ii.] If $\mathcal{T}_fW=0$ and  $\dbar W\in \Gamma^{\infty}_{(r,s+1)}$,  
then there exists  $Y\in \Gamma^{\infty}_{(r+1,s)}$ such that 
\begin{itemize}
\item[a.] $\dbar Y\in \Gamma^{\infty}_{(r+1,s+1)}$ and $\mathcal{T}_fY=W$, 
\item[b.] $\text{supp}(Y)\subset \D$ and $\text{supp}(Y)\cap f^{-1}(0)=\emptyset$.
\end{itemize}
\end{itemize}
\end{lemma}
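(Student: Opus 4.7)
The plan is to build an explicit right inverse for $\mathcal{T}_f$ on $\D\setminus f^{-1}(0)$ via a Koszul-type homotopy and then localize by multiplying with a smooth cutoff. On $\D\setminus f^{-1}(0)$ the function $|f|^2=\sum_{j=1}^m|f_j|^2$ is strictly positive, so for $W\in\Gamma^{\infty}_{(r,s)}$ I would set
\[ K(W)=\sum_{j=1}^m\frac{\overline{f_j}}{|f|^2}\,e_j\wedge W. \]
Property (2) applied with $A=e_j$ (so $|e_j|_1=1$) gives $\mathcal{T}_f(e_j\wedge W)=f_jW-e_j\wedge\mathcal{T}_f W$, and summing these identities against the coefficients $\overline{f_j}/|f|^2$ yields the Koszul homotopy identity
\[ \mathcal{T}_f K(W)+K(\mathcal{T}_f W)=W \qquad \text{on } \D\setminus f^{-1}(0). \]

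Next, since $\text{supp}(W)$ is a compact subset of $\D$ disjoint from $f^{-1}(0)$, I would choose $\chi\in C_c^{\infty}(\D)$ with $\chi\equiv 1$ on $\text{supp}(W)$ and $\text{supp}(\chi)\cap f^{-1}(0)=\emptyset$, and set $Y=\chi\cdot K(W)$, extended by zero outside $\text{supp}(\chi)$. Because $K(W)$ is smooth on the neighborhood $\text{supp}(\chi)$ of $\text{supp}(W)$, the form $Y$ lies in $\Gamma^{\infty}_{(r+1,s)}$ with compact support in $\D\setminus f^{-1}(0)$, so condition (b) of both parts is immediate. Since $\mathcal{T}_f$ is linear over smooth scalar functions, $\mathcal{T}_f Y=\chi\,\mathcal{T}_f K(W)$; under the hypothesis $\mathcal{T}_f W=0$ the homotopy identity collapses to $\mathcal{T}_f K(W)=W$ on $\text{supp}(\chi)$, hence $\mathcal{T}_f Y=\chi W=W$. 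This proves part (i).

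For part (ii) I would compute $\dbar Y=\dbar\chi\wedge K(W)+\chi\,\dbar K(W)$, where $\dbar K(W)$ is a finite sum built from $\dbar(\overline{f_j}/|f|^2)$ and $\dbar W$; both are smooth and bounded on the compact set $\text{supp}(\chi)\subset \D\setminus f^{-1}(0)$, using the hypothesis $\dbar W\in\Gamma^{\infty}_{(r,s+1)}$ for the second. Consequently $\dbar Y$ is smooth with compact support in $\D$, so $\dbar Y\in\Gamma^{\infty}_{(r+1,s+1)}$, which together with the formula $\mathcal{T}_f Y=W$ already established gives part (ii).

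The only point that requires real attention is the sign bookkeeping in the Leibniz rule (2) used to verify the Koszul homotopy identity; once that identity is in hand the cutoff step is standard and delivers both parts of the lemma simultaneously.
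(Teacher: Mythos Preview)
Your proof is correct and is essentially identical to the paper's argument: defining $X=\sum_j(\chi\overline{f_j}/|f|^2)\,e_j$ and $Y=X\wedge W$ (the paper's choice) gives exactly your $Y=\chi\cdot K(W)$, and the verification that $\mathcal{T}_fY=W$ via the Leibniz rule (2) is the same computation you carry out through the Koszul homotopy identity. The only cosmetic difference is that you state the full homotopy relation $\mathcal{T}_fK+K\mathcal{T}_f=\mathrm{id}$ before specializing to $\mathcal{T}_fW=0$, whereas the paper applies the Leibniz rule directly to $X\wedge W$; also, the paper singles out the edge case $r=m$ (where $W$ is forced to vanish), but your homotopy identity handles this implicitly since $K(W)=0$ in top degree.
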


\begin{proof}
First let us prove the lemma in case $r=m$.  In this case one can 
show that  $\mathcal{T}_fW=0$ and $\text{supp}(W)\cap f^{-1}(0)=\emptyset$ 
imply that $W=0$. So we can choose $Y=0\in  \Gamma^{\infty}_{(m+1,s)}$. 
For the rest of the proof we will assume that $0\leq r\leq m-1$. 

Now let us prove i.  Let $\chi\in C^{\infty}_0(\D)$ be a smooth compactly 
supported cut-off  function such that $\chi=1$ on a neighborhood of 
$\text{supp}(W)$ and  $\text{supp}(\chi)\cap f^{-1}(0)=\emptyset$. 
We define   
\[g_j=\frac{\chi \overline{f_j}}{\sum_{l=1}^{m}|f_l|^2}\]
and 
\[X=\sum_{j=1}^m e_j\otimes g_j\in\Gamma^{\infty}_{(1,0)}.\]
Then $g_j\in C^{\infty}_0(\D)$ for $j=1,2,\ldots,m$ and 
$\mathcal{T}_fX=1\in\Gamma^{\infty}_{(0,0)}$  on the support of $W$ 
because $\chi=1$ on a neighborhood of $\text{supp}(W)$ and  
$\sum_{j=1}^{m}f_j(z)g_j(z)=1$ whenever $\chi(z) =1$. 

Let us define $Y=X\wedge W\in \Gamma^{\infty}_{(r+1,s)}$. Then 
$\text{supp}(Y)$ is a compact  subset of $\D$ and 
$\text{supp}(Y)\cap f^{-1}(0)=\emptyset$. Furthermore,  
$\mathcal{T}_fX=1$ on the support of $W$ and  
\[\mathcal{T}_fY=\mathcal{T}_f(X)\wedge W-X\wedge 
\mathcal{T}_fW=1\wedge W=W \]
because $\mathcal{T}_fW=0$. 

To prove ii. we observe that, in the proof of i. above, $X$ is smooth 
compactly supported in $\D$. Therefore, if $\dbar W$ is bounded 
then so is $\dbar Y$ as $Y=X\wedge W$. 
\end{proof}

If $f_j\in A^{\infty}(\D)$ for $j=1,2,\ldots,m$ in the lemma above, 
 we have the following lemma. 

\begin{lemma} \label{Lem1.1} 
Let $\D$ be a bounded domain in $\C^n,V$ be an $m$-dimensional 
vector space, and $f_j\in A^{\infty}(\D)$ for $j=1,2,\ldots,m$.  
Assume that $W\in \wedge^rV \otimes C^{\infty}_{(0,s)}(\Dc)$ for 
$0\leq r\leq m, 0\leq s\leq n$, and $\text{supp}(W)\cap f^{-1}(0)=\emptyset$ 
where $f=(f_1,\ldots,f_m)$. If $\mathcal{T}_fW=0$ then there exists 
$Y\in \wedge^{r+1}V \otimes C^{\infty}_{(0,s)}(\Dc)$ such that 	
$\text{supp}(Y)\cap f^{-1}(0)=\emptyset$ and $\mathcal{T}_fY=W$.		
\end{lemma}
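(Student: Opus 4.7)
The plan is to adapt the argument of Lemma \ref{Lem1} part i. to the setting where objects are smooth up to $b\D$ rather than compactly supported inside $\D$. The structure of the construction carries over almost verbatim; the only thing that changes is the class the cut-off function lives in.

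First, I would dispose of the boundary case $r=m$ in the same way as in Lemma \ref{Lem1}: the identities $\mathcal{T}_fW=0$ and $\text{supp}(W)\cap f^{-1}(0)=\emptyset$ force $W=0$, so $Y=0$ works. For the main case $0\leq r\leq m-1$, I would use that $\text{supp}(W)$ is a compact subset of $\Dc$ disjoint from the relatively closed set $f^{-1}(0)\cap \Dc$ (which is compact since $\Dc$ is bounded and $f$ is continuous on $\Dc$). Consequently there exists a cut-off $\chi\in C^{\infty}(\C^n)$ with $\chi\equiv 1$ on a neighborhood of $\text{supp}(W)$ and with $\text{supp}(\chi)\cap f^{-1}(0)=\emptyset$. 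The important difference from Lemma \ref{Lem1} is that I would \emph{not} require $\chi$ to be compactly supported inside $\D$, only smooth on an open neighborhood of $\Dc$.

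Next I would mimic the Koszul-type construction. Define
\[
g_j=\frac{\chi\, \overline{f_j}}{\sum_{l=1}^{m}|f_l|^2}, \qquad j=1,\ldots,m,
\]
which is well defined and $C^{\infty}$ on a neighborhood of $\Dc$ because $f_j\in A^{\infty}(\D)$ (so $\overline{f_j}$ is smooth up to $b\D$) and because the denominator is bounded away from zero on $\text{supp}(\chi)$. Setting
\[
X=\sum_{j=1}^{m} e_j\otimes g_j\in \wedge^1V\otimes C^{\infty}(\Dc),
\]
the construction gives $\mathcal{T}_f X=\sum_j f_j g_j=\chi$, which equals $1$ on a neighborhood of $\text{supp}(W)$. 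Putting $Y=X\wedge W\in \wedge^{r+1}V\otimes C^{\infty}_{(0,s)}(\Dc)$, the derivation property of $\mathcal{T}_f$ together with $\mathcal{T}_f W=0$ yields
\[
\mathcal{T}_f Y=\mathcal{T}_f(X)\wedge W - X\wedge \mathcal{T}_f W = \chi\, W = W,
\]
since $\chi=1$ on $\text{supp}(W)$.

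Finally, I would check the support condition: $\text{supp}(Y)\subset \text{supp}(X)\cap \text{supp}(W)\subset \text{supp}(\chi)$, which was chosen disjoint from $f^{-1}(0)$, so $\text{supp}(Y)\cap f^{-1}(0)=\emptyset$. There is no substantive obstacle here; the only point requiring care is confirming that $g_j$ is smooth \emph{up to} $b\D$, which uses exactly the hypothesis $f_j\in A^{\infty}(\D)$ and the fact that $\text{supp}(\chi)$ avoids the full boundary zero set $f^{-1}(0)\cap \Dc$, not merely $f^{-1}(0)\cap \D$. This is the one place where the $A^{\infty}$-regularity of the $f_j$ is essential compared to the setting of Lemma \ref{Lem1}.
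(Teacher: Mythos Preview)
Your proposal is correct and follows the paper's own approach essentially verbatim: the paper also says the proof is the same as that of Lemma~\ref{Lem1}, the only change being to take $\chi\in C^{\infty}(\Dc)$ with $\chi=1$ near $\text{supp}(W)$ and $\text{supp}(\chi)\cap f^{-1}(0)=\emptyset$, rather than $\chi\in C^{\infty}_0(\D)$. Your added remarks on why $g_j$ is smooth up to $b\D$ and why the support condition propagates are accurate elaborations of details the paper leaves implicit.
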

 \begin{proof}
The proof of this lemma is very similar to the proof of Lemma \ref{Lem1}. 
 The only difference is that we choose $\chi\in C^{\infty}(\Dc)$ be a 
 smooth function such that $\chi=1$ on a neighborhood of 
 $\text{supp}(W)$ and $\text{supp}(\chi)\cap f^{-1}(0)=\emptyset$.  
 \end{proof}
 
\begin{lemma} \label{Lem2}
Let $\D$ be a bounded $L^{\infty}$-pseudoconvex domain in 
$\C^n,f=(f_1,\ldots,f_m):\D\to \C^m$  be a bounded holomorphic 
mapping, and $W\in \Gamma^{\infty}_{(r,s)}$ for  $0\leq r\leq m$ and 
$1\leq s\leq n$ such that 
\begin{itemize}
\item[i.] $\text{supp}(W)\subset \D$ and 
	$\text{supp}(W)\cap f^{-1}(0)=\emptyset$, 
\item[ii.] $\dbar W=0$ and $\mathcal{T}_fW=0$. 
\end{itemize}
Then there exists $Y\in \Gamma^{\infty}_{(r+1,s-1)}$ such that  
$Y\in Dom_{\infty}(\dbar)$ and $\mathcal{T}_f\dbar Y=W$.
\end{lemma}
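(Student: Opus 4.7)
The plan is a Koszul-type double-complex chase, combining Lemma \ref{Lem1} ii with the $\dbar$-solvability afforded by $L^{\infty}$-pseudoconvexity. Starting from $W$ I will build an ascending staircase $A_1,\dots,A_K$ ending in a $\dbar$-closed form, then unwind by a descending staircase $B_K,\dots,B_1$ solving $\dbar$ backwards, and take $Y = B_1$. One may assume $r<m$, since when $r=m$ the top-$r$-degree observation at the start of the proof of Lemma \ref{Lem1} already forces $W=0$, and then $Y=0$ works.

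For the ascending chain, Lemma \ref{Lem1} ii applied to $W$ supplies $A_1 \in \Gamma^{\infty}_{(r+1,s)}$ with support compact in $\D\setminus f^{-1}(0)$, $\mathcal{T}_f A_1=W$, and $\dbar A_1$ bounded; the explicit construction $A_1 = X\wedge W$ used in that proof together with $\dbar W = 0$ actually makes $\dbar A_1$ smooth with compact support in $\D\setminus f^{-1}(0)$. Inductively, once $A_k$ has been built, the identity $\mathcal{T}_f \dbar A_k = \dbar \mathcal{T}_f A_k = \dbar^2 A_{k-1} = 0$ lets us apply Lemma \ref{Lem1} ii to $\dbar A_k$ and obtain $A_{k+1}\in \Gamma^{\infty}_{(r+k+1,s+k)}$ with compact support off $f^{-1}(0)$, $\mathcal{T}_f A_{k+1} = \dbar A_k$, and $\dbar A_{k+1}$ once more smooth compactly supported. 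The chain terminates at $K=\min(n-s+1,\,m-r)$, where $\dbar A_K=0$: either because $\dbar A_K\in \Gamma^{\infty}_{(r+K,n+1)}=\{0\}$ by the paper's convention, or because $\dbar A_K\in \Gamma^{\infty}_{(m,s+K)}$ is $\mathcal{T}_f$-closed with support off $f^{-1}(0)$, which vanishes by the top-$r$-degree observation in the proof of Lemma \ref{Lem1}.

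For the descending chain, define $B_k\in \Gamma^{\infty}_{(r+k,s+k-2)}$ recursively for $k=K,K-1,\dots,1$ by $\dbar B_K=A_K$ and
\[
\dbar B_k = A_k - \mathcal{T}_f B_{k+1}\qquad (1\leq k<K),
\]
with the convention $B_{K+1}:=0$. For $k=K$, smoothness and boundedness of $B_K$ follow from the compact support of $A_K$: extend $A_K$ by zero to $\C^n$ and solve via the Bochner--Martinelli--Koppelman formula on $\C^n$, then restrict to $\D$. For $k<K$ the right-hand side is bounded, smooth, and $\dbar$-closed, the last because
\[
\dbar(A_k-\mathcal{T}_f B_{k+1})=\dbar A_k-\mathcal{T}_f\dbar B_{k+1}=\dbar A_k-\mathcal{T}_f A_{k+1}+\mathcal{T}_f^2 B_{k+2}=0,
\]
using $\dbar\mathcal{T}_f=\mathcal{T}_f\dbar$, $\mathcal{T}_f^2=0$, and the chain relation $\mathcal{T}_f A_{k+1}=\dbar A_k$; the $L^{\infty}$-pseudoconvexity of $\D$ then produces a bounded $B_k$. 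Setting $Y=B_1\in \Gamma^{\infty}_{(r+1,s-1)}$ finally gives $\mathcal{T}_f\dbar Y=\mathcal{T}_f A_1-\mathcal{T}_f^2 B_2=W$, as required.

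The hard part will be ensuring that the intermediate $B_k$ are smooth, not merely bounded, since the definition of $L^{\infty}$-pseudoconvexity only produces $L^{\infty}$ solutions. When $s=1$, the bottom element $B_1$ is function-valued and its smoothness is automatic by elliptic regularity applied componentwise to $\dbar B_1$ (each first $\dbar$-derivative is smooth, and cross-differentiation forces $\Delta B_1$ smooth). The case $k=K$ is taken care of by the explicit kernel above. For $s\geq 2$ and $k<K$ one must upgrade a bounded $L^{\infty}$ solution to a smooth bounded one, e.g.\ by subtracting a Stein-smooth solution (whose existence on the pseudoconvex $\D$ is guaranteed by Cartan's Theorem~B) and modifying the resulting $\dbar$-closed correction so as to preserve both smoothness and boundedness, or by invoking the specific structure of the integral-kernel solution operators known to produce smooth bounded solutions on the concrete $L^{\infty}$-pseudoconvex domains listed just after the definition of that class.
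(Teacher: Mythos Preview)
Your argument is essentially the paper's proof with the induction unrolled: the paper proceeds by descending induction on $s$, where the inductive step produces your $A_1$ via Lemma~\ref{Lem1}~ii, invokes the induction hypothesis on $\dbar A_1$ to obtain what in your notation is $B_2$, forms $Y_3=A_1-\mathcal{T}_fB_2$, and then solves $\dbar Y=Y_3$. Unwinding that induction yields exactly your ascending chain $A_1,\dots,A_K$ followed by the descending chain $B_K,\dots,B_1$, with the base case $s=n$ corresponding to your terminal $A_K$ being $\dbar$-closed for degree reasons. The paper's inductive packaging is tidier, but the content is identical.

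Your final paragraph is more scrupulous than the paper: the paper simply asserts that the $\dbar$-solution $Y$ lies in $\Gamma^{\infty}_{(r+1,k-1)}$ (i.e.\ is smooth as well as bounded), whereas the stated definition of $L^{\infty}$-pseudoconvexity only guarantees an $L^{\infty}$ solution. So the regularity concern you flag is real and is handled in the paper without comment; your sketched remedies (interior elliptic regularity at the function level, or appealing to the integral-kernel operators available on the specific domains listed) are the natural ways to close that gap.
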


\begin{proof}
In case $r=m$, as in the proof of Lemma \ref{Lem1}, one can 
show that if $W$ satisfies the conditions of the lemma then 
$W=0$. So we can choose $Y=0$. For the rest of the proof 
we will assume that $0\leq r\leq m-1$.
    
First we will assume that  $\D$ is a bounded $L^{\infty}$-pseudoconvex 
domain.  We will use a descending induction on $s$ to prove this lemma.  
So let $s=n, 0\leq r\leq m-1$, and 
 $W\in \Gamma^{\infty}_{(r,n)}$ such that $\text{supp}(W)\subset 
 \D,\text{supp}(W)\cap f^{-1}(0)=\emptyset$, and $\mathcal{T}_fW=0$ 
 ($\dbar W=0$ as any $(0,n)$-form is $\dbar$-closed). Then i. in 
 Lemma \ref{Lem1} implies that there exists 
 $Y_1\in \Gamma^{\infty}_{(r+1,n)}$ with  the following properties: 
\begin{itemize}
 \item[i.] $\text{supp}(Y_1)\subset \D$ and 
 	$\text{supp}(Y_1)\cap f^{-1}(0)=\emptyset$, 
 \item[ii.] $\mathcal{T}_fY_1=W$.
\end{itemize} Furthermore, since $Y_1\in \Gamma^{\infty}_{(r+1,n)}$ it 
is $\dbar$-closed. Then (since $\D$ is $L^{\infty}$-pseudoconvex) there exists 
$Y\in\Gamma^{\infty}_{(r+1,n-1)}$ 
such that $\dbar Y=Y_1$. That is, $\mathcal{T}_f\dbar Y=W$.  

Now we will assume that the lemma is true for $s=k+1,k+2, \ldots, n$ 
and  $r=0,1,\ldots,m-1$. Let $0\leq r\leq m-1$ and assume that  
$W\in \Gamma^{\infty}_{(r,k)}$ with the following properties: 
\begin{itemize}
\item[i.] $\text{supp}(W)\subset \D$ and 
	$\text{supp}(W)\cap f^{-1}(0) =\emptyset$, 
\item[ii.] $\dbar  W=0$ and $ \mathcal{T}_fW=0$.
\end{itemize}
Then ii. in Lemma \ref{Lem1} implies that there exists  
$Y_1\in \Gamma^{\infty}_{(r+1,k)}$ such that
\begin{itemize}
\item[i.] $\dbar Y_1\in \Gamma^{\infty}_{(r+1,k+1)}$ 
	and $W=\mathcal{T}_fY_1$, 
\item[ii.] $\text{supp}(Y_1)\subset \D$ and 
 	$\text{supp}(Y_1)\cap f^{-1}(0)=\emptyset$.
\end{itemize}
Then 
\[\mathcal{T}_f\dbar Y_1=\dbar \mathcal{T}_fY_1=\dbar W=0.\]
So $\dbar Y_1$ satisfies the conditions in the lemma for $s=k+1$. 
That is,  $\dbar Y_1\in \Gamma^{\infty}_{(r+1,k+1)}$ such that 
\begin{itemize}
\item[i.] $\text{supp}(\dbar Y_1)\subset \D$ and 
	$\text{supp}(\dbar Y_1)\cap f^{-1}(0)=\emptyset$, 
\item[ii.]  $\dbar \dbar Y_1=0$  and $\mathcal{T}_f\dbar Y_1=\dbar W=0$. 
\end{itemize}
By the induction hypothesis, there exists $Y_2\in \Gamma^{\infty}_{(r+2,k)}$ 
such that  $\dbar Y_2\in \Gamma^{\infty}_{(r+2,k+1)}$ and 
$\mathcal{T}_f\dbar Y_2=\dbar Y_1$. Then 
\[\dbar \mathcal{T}_f Y_2=\mathcal{T}_f\dbar Y_2=\dbar Y_1.\]
We define $Y_3=Y_1-\mathcal{T}_fY_2\in \Gamma^{\infty}_{(r+1,k)}$.
Then the equality above implies that  
\[\mathcal{T}_fY_3=\mathcal{T}_f Y_1-\mathcal{T}_f \mathcal{T}_fY_2=W\]
and $\dbar Y_3=\dbar Y_1-\dbar \mathcal{T}_fY_2=0.$
 Since $\D$ is $L^{\infty}$-pseudoconvex domain we conclude that  there 
 exists $Y\in \Gamma^{\infty}_{(r+1,k-1)}$ such that $\dbar Y=Y_3$. That is, 
$\mathcal{T}_f\dbar Y=W$. Hence the proof of Lemma \ref{Lem2} is complete. 
\end{proof}

\begin{lemma}\label{Lem2.1}
Let $\Omega$ be a smooth bounded pseudoconvex domain in $\C^n, V$ 
be an $m$-dimensional vector space,  and  $f_i\in A^{\infty}(\Omega)$ 
for $i=1,\ldots,m$. Assume that
$W\in \wedge^{r}V \otimes C^{\infty}_{(0,s)}(\Dc)$
for $0\leq r\leq m$ and $1\leq s\leq n$ such that 
$\text{supp}(W)\cap f^{-1}(0)=\emptyset, \dbar W=0$, and 
$\mathcal{T}_fW=0$. Then there exists  
$Y\in \wedge^{r+1}V \otimes C^{\infty}_{(0,s-1)}(\Dc)$ such that  
$\mathcal{T}_f\dbar Y=W$.
\end{lemma}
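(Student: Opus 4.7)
The plan is to mirror the proof of Lemma \ref{Lem2} almost verbatim, substituting Lemma \ref{Lem1.1} for Lemma \ref{Lem1} and replacing the appeal to $L^{\infty}$-pseudoconvexity with Kohn's theorem on $\dbar$-solvability in $C^{\infty}(\overline{\Omega})$ for smooth bounded pseudoconvex domains. The case $r=m$ is handled by the same observation as in Lemma \ref{Lem2}: if $\mathcal{T}_f W=0$ and $\text{supp}(W)\cap f^{-1}(0)=\emptyset$, then $W=0$, and we take $Y=0$. So we may assume $0\leq r\leq m-1$ and perform a descending induction on $s$.

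For the base case $s=n$, any $(0,n)$-form is automatically $\dbar$-closed, so Lemma \ref{Lem1.1} provides $Y_1\in \wedge^{r+1}V\otimes C^{\infty}_{(0,n)}(\overline{\Omega})$ with $\mathcal{T}_f Y_1=W$ and $\text{supp}(Y_1)\cap f^{-1}(0)=\emptyset$. Since $Y_1$ is trivially $\dbar$-closed, Kohn's theorem gives $Y\in \wedge^{r+1}V\otimes C^{\infty}_{(0,n-1)}(\overline{\Omega})$ with $\dbar Y=Y_1$, and hence $\mathcal{T}_f\dbar Y=W$. For the inductive step, suppose the lemma is known at level $s=k+1$ for all $r$. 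Given $W$ at level $s=k$, apply Lemma \ref{Lem1.1} to obtain $Y_1\in \wedge^{r+1}V\otimes C^{\infty}_{(0,k)}(\overline{\Omega})$ with $\mathcal{T}_f Y_1=W$ and $\text{supp}(Y_1)\cap f^{-1}(0)=\emptyset$. Using $\dbar\mathcal{T}_f=\mathcal{T}_f\dbar$ we get $\mathcal{T}_f\dbar Y_1=\dbar W=0$, while $\dbar Y_1$ is $\dbar$-closed and satisfies $\text{supp}(\dbar Y_1)\cap f^{-1}(0)=\emptyset$ (since $\text{supp}(\dbar Y_1)\subset \text{supp}(Y_1)$). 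The induction hypothesis at level $(r+1,k+1)$ therefore yields $Y_2\in \wedge^{r+2}V\otimes C^{\infty}_{(0,k)}(\overline{\Omega})$ with $\mathcal{T}_f\dbar Y_2=\dbar Y_1$. Setting $Y_3=Y_1-\mathcal{T}_f Y_2$, one checks $\mathcal{T}_f Y_3=W$ (using $\mathcal{T}_f\mathcal{T}_f=0$) and $\dbar Y_3=\dbar Y_1-\dbar \mathcal{T}_f Y_2=\dbar Y_1-\mathcal{T}_f\dbar Y_2=0$. A second application of Kohn's theorem produces $Y\in \wedge^{r+1}V\otimes C^{\infty}_{(0,k-1)}(\overline{\Omega})$ with $\dbar Y=Y_3$, so that $\mathcal{T}_f\dbar Y=\mathcal{T}_f Y_3=W$.

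The only non-bookkeeping ingredient, and therefore the main substantive point, is the availability of a smooth-up-to-the-boundary $\dbar$-solution operator on smooth bounded pseudoconvex domains: given a $\dbar$-closed form in $C^{\infty}_{(0,s)}(\overline{\Omega})$ with $1\leq s\leq n$, there is a solution in $C^{\infty}_{(0,s-1)}(\overline{\Omega})$. This is the classical global regularity theorem of Kohn and plays exactly the role that $L^{\infty}$-pseudoconvexity played in Lemma \ref{Lem2}. Note that we do not need any support condition on the solution $Y$ itself (no analogue of $\text{supp}(Y)\cap f^{-1}(0)=\emptyset$ is required in the conclusion), so the loss of such a property under the $\dbar$-solution operator is harmless.
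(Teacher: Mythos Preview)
Your proposal is correct and follows exactly the approach the paper takes: the paper's own proof of Lemma~\ref{Lem2.1} is just a two-sentence sketch saying to repeat the proof of Lemma~\ref{Lem2}, replacing Lemma~\ref{Lem1} by Lemma~\ref{Lem1.1} and the $L^{\infty}$-pseudoconvexity step by Kohn's global regularity theorem, which is precisely what you have carried out in detail.
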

\begin{proof}
This proof is similar to the proof of Lemma \ref{Lem2} with the 
following changes: Instead of Lemma \ref{Lem1} we use 	
Lemma \ref{Lem1.1} and, at the last step (since and 
$f_j\in A^{\infty}(\D)$), we use the following result of Joseph Kohn 
\cite{Kohn73} (see also \cite[Theorem 6.1.1]{ChenShawBook}): 
Let $\D$ be a smooth bounded pseudoconvex domain in 
$\C^n, 1\leq q\leq n$, and  $u\in C^{\infty}_{(0,q)}(\Dc)$ with 
$\dbar u=0$. Then there exists $f\in C^{\infty}_{(0,q-1)}(\Dc)$ 
such that $\dbar f=u$.
\end{proof}	

\begin{lemma}\label{Lem1'}
Let $\D$ be a bounded domain in $\C^n$ and $f_j\in H^{\infty}(\D)$ 
for $j=1,\ldots, m$ such that $\sum_{j=1}^m|f_j|^2>\ep$ on  $\D$ 
for some $\ep>0$ and $\partial f_j\in L^{\infty}_{(1,0)}(\D)$ for 
$j=1,\ldots,m$. Assume that $W\in \Gamma^{\infty}_{(r,s)}$ for 
$0\leq r\leq m$ and $0\leq s\leq n$ such that 	$\mathcal{T}_fW=0$ 
and  $\dbar W\in \Gamma^{\infty}_{(r,s+1)}$. Then there exists 
$Y\in \Gamma^{\infty}_{(r+1,s)}$ such that 
$\dbar Y\in \Gamma^{\infty}_{(r+1,s+1)}$ and  $\mathcal{T}_fY=W$.
\end{lemma}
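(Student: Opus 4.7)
The plan is to mimic the construction in the proof of Lemma~\ref{Lem1}~ii., but to exploit the global nonvanishing hypothesis $\sum_j |f_j|^2 > \ep$ to dispense with the cutoff function; the new hypothesis $\partial f_j \in L^{\infty}_{(1,0)}(\D)$ is precisely what is needed to recover boundedness of $\dbar$ applied to the form that we construct.

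First I would dispose of the case $r=m$ as in Lemma~\ref{Lem1}: writing $W \in \Gamma^{\infty}_{(m,s)}$ as $e_1 \wedge \cdots \wedge e_m \otimes w$ and expanding $\mathcal{T}_fW$ forces $f_j w = 0$ for each $j$, and since at every point of $\D$ some $f_j$ is nonzero we obtain $w = 0$, hence $W = 0$, so $Y = 0 \in \Gamma^{\infty}_{(m+1,s)} = \{0\}$ works. For $0 \le r \le m-1$, I set
\[
g_j := \frac{\overline{f_j}}{\sum_{l=1}^{m}|f_l|^2}\qquad (j=1,\dots,m), \qquad X := \sum_{j=1}^{m} e_j \otimes g_j \in \Gamma^{\infty}_{(1,0)}.
\]
Each $g_j$ is smooth on $\D$ (the $f_j$ are holomorphic, hence smooth, and the denominator exceeds $\ep$) and bounded by $\|f_j\|_{\infty}/\ep$, so $X \in \Gamma^{\infty}_{(1,0)}$. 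By construction $\mathcal{T}_fX = \sum_j f_j g_j = 1$ on $\D$. Setting $Y := X \wedge W \in \Gamma^{\infty}_{(r+1,s)}$, the derivation property of $\mathcal{T}_f$ together with $\mathcal{T}_fW = 0$ gives
\[
\mathcal{T}_fY \;=\; \mathcal{T}_f(X)\wedge W - X \wedge \mathcal{T}_fW \;=\; W.
\]

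The only substantive remaining step is to verify $\dbar Y \in \Gamma^{\infty}_{(r+1,s+1)}$. Since $\dbar Y = \dbar X \wedge W + X \wedge \dbar W$ and $X$, $W$, $\dbar W$ are all bounded and smooth, this reduces to bounding $\dbar X = \sum_j e_j \otimes \dbar g_j$. A direct computation gives
\[
\dbar g_j \;=\; \frac{\overline{\partial f_j}}{\sum_l |f_l|^2} \;-\; \frac{\overline{f_j}\,\sum_l f_l\,\overline{\partial f_l}}{\bigl(\sum_l |f_l|^2\bigr)^2},
\]
and here the hypothesis $\partial f_j \in L^{\infty}_{(1,0)}(\D)$ becomes decisive: combined with $\sum_l |f_l|^2 > \ep$ and $f_j \in H^{\infty}(\D)$ it furnishes a uniform bound on each $\dbar g_j$, hence on $\dbar X$, completing the proof. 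I expect this boundedness check to be the heart of the argument; everything else is formal once the Koszul formalism of the preceding lemmas is in place.
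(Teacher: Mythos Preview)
Your proposal is correct and follows essentially the same argument as the paper: define $g_j=\overline{f_j}/\sum_l|f_l|^2$, set $X=\sum_j e_j\otimes g_j$ and $Y=X\wedge W$, then verify $\mathcal{T}_fY=W$ and use the explicit formula for $\dbar g_j$ together with $\partial f_j\in L^{\infty}_{(1,0)}(\D)$ to conclude $\dbar Y\in\Gamma^{\infty}_{(r+1,s+1)}$. The only cosmetic difference is that you treat the case $r=m$ separately, whereas the paper leaves it implicit here.
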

\begin{proof} 
The proof will 	be similar to the proof of Lemma \ref{Lem1}.
Let $V$ be a vector space of dimension $m$ and 
$\{e_1,e_2,\ldots,e_m\}$ be a basis for $V$.  We define 
\[g_j=\frac{\overline{f_j}}{\sum_{l=1}^m|f_l|^2}\]
and $X=\sum_{j=1}^me_j\otimes g_j\in\Gamma^{\infty}_{(1,0)}$. 
Then $g_j\in L^{\infty}(\D)$ and 
\[\dbar g_j=  \frac{\dbar \overline{f_j}}{\sum_{l=1}^m|f_l|^2}
 	-\frac{\overline{f_j}\sum_{l=1}^mf_l
	\dbar\overline{f_l} }{\left(\sum_{l=1}^m|f_l|^2\right)^2} 
 	\in L^{\infty}_{(0,1)}(\D).\]
Furthermore, 
$\dbar X=\sum_{j=1}^m e_j\otimes\dbar g_j\in\Gamma^{\infty}_{(1,1)}$. 
Then $Y=X\wedge W\in \Gamma^{\infty}_{(r+1,s)}$ satisfies the following properties: 
$\dbar Y= \dbar X\wedge W+X\wedge \dbar W \in \Gamma^{\infty}_{(r+1,s+1)}$ and 
\[\mathcal{T}_fY=\mathcal{T}_f(X)\wedge W-X\wedge 
\mathcal{T}_fW=1\wedge W=W \]
as $\mathcal{T}_fW=0$. 
\end{proof}

\begin{proposition}\label{Prop1}
Let $\D$ be a bounded $L^{\infty}$-pseudoconvex domain in $\C^n$ 
and  $f_j\in H^{\infty}(\D)$ for $j=1,\ldots, m$ such that 
$\sum_{j=1}^m|f_j|^2>\ep$ on $\D$ for some $\ep>0$ and 
$\partial f_j\in L^{\infty}_{(1,0)}(\D)$ for $j=1,\ldots,m$. Assume 
that $W\in \Gamma^{\infty}_{(r,s)}$ for $0\leq r\leq m$ and 
$0\leq s\leq n$ such that $\dbar W=0$ and $\mathcal{T}_fW=0$. 
Then there exists  $Y\in \Gamma^{\infty}_{(r+1,s)}$ such that 
$\dbar Y=0$ and  $\mathcal{T}_fY=W$.
\end{proposition}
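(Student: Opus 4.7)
The proof proceeds by descending induction on $s$, closely paralleling Lemma \ref{Lem2} but with Lemma \ref{Lem1'} replacing Lemma \ref{Lem1} at each step. The edge case $r = m$ is immediate: writing $W = e_1 \wedge \cdots \wedge e_m \otimes w$ and expanding via property (2) gives $\mathcal{T}_f W = \sum_{j=1}^{m}(-1)^{j-1} f_j w \, e_1 \wedge \cdots \wedge \widehat{e_j} \wedge \cdots \wedge e_m$, so $\mathcal{T}_f W = 0$ together with $\sum_j|f_j|^2 > \ep$ forces $w \equiv 0$, and $Y = 0$ works. The base case $s = n$ is also quick: $\dbar W = 0$ is automatic because $\Gamma^{\infty}_{(r,n+1)} = \{0\}$, and Lemma \ref{Lem1'} directly produces $Y \in \Gamma^{\infty}_{(r+1,n)}$ with $\mathcal{T}_f Y = W$ and $\dbar Y \in \Gamma^{\infty}_{(r+1,n+1)} = \{0\}$.

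For the inductive step, fix $0 \le k \le n-1$ and $0 \le r \le m-1$ and assume the proposition at levels $s = k+1, \ldots, n$. Given $W \in \Gamma^{\infty}_{(r,k)}$ with $\dbar W = 0$ and $\mathcal{T}_f W = 0$, Lemma \ref{Lem1'} supplies $Y_1 \in \Gamma^{\infty}_{(r+1,k)}$ with $\mathcal{T}_f Y_1 = W$ and $\dbar Y_1 \in \Gamma^{\infty}_{(r+1,k+1)}$. Since $\mathcal{T}_f \dbar Y_1 = \dbar \mathcal{T}_f Y_1 = \dbar W = 0$ and $\dbar(\dbar Y_1) = 0$, the inductive hypothesis applied to $\dbar Y_1$ at level $s = k+1$ (with $r$ replaced by $r+1$) yields $Y_2 \in \Gamma^{\infty}_{(r+2,k+1)}$ with $\dbar Y_2 = 0$ and $\mathcal{T}_f Y_2 = \dbar Y_1$. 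Now I invoke $L^{\infty}$-pseudoconvexity coefficient-wise in a basis of $\wedge^{r+2}V$ to solve $\dbar Y_3 = Y_2$ for some $Y_3 \in \Gamma^{\infty}_{(r+2,k)}$. Setting $Y := Y_1 - \mathcal{T}_f Y_3 \in \Gamma^{\infty}_{(r+1,k)}$, the identity $\mathcal{T}_f\mathcal{T}_f = 0$ gives $\mathcal{T}_f Y = \mathcal{T}_f Y_1 = W$, while $\dbar Y = \dbar Y_1 - \mathcal{T}_f \dbar Y_3 = \dbar Y_1 - \mathcal{T}_f Y_2 = 0$, closing the induction.

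The main point requiring care is keeping all forms inside the smooth-bounded class $\Gamma^{\infty}_{(*,*)}$ throughout the recursion; this is exactly what the hypotheses $\sum_j |f_j|^2 > \ep$ and $\partial f_j \in L^{\infty}$ are designed to provide, since the auxiliary functions $g_j = \overline{f_j}/\sum_l |f_l|^2$ in Lemma \ref{Lem1'} then lie in $CL^{\infty}(\D)$ with $\dbar g_j \in L^{\infty}_{(0,1)}(\D)$. The only genuine function-theoretic input is the $L^{\infty}$-pseudoconvex $\dbar$-solvability at the single step producing $Y_3$ (valid because $k+1 \ge 1$); the remainder of the argument is a formal Koszul-type manipulation whose correctness follows from the algebraic identities $\mathcal{T}_f \dbar = \dbar \mathcal{T}_f$ and $\mathcal{T}_f \mathcal{T}_f = \dbar \dbar = 0$.
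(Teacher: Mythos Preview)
Your proof is correct and follows essentially the same approach as the paper: descending induction on $s$, using Lemma~\ref{Lem1'} to lift $W$ to a first approximation, applying the inductive hypothesis to the resulting $\dbar$-error, and then correcting via an $L^{\infty}$-solution of $\dbar$. The only cosmetic differences are your variable names ($Y_1,Y_2,Y_3$ in place of the paper's $\widetilde{Y},Y_1,Y_2$) and your explicit treatment of the edge case $r=m$, which the paper leaves implicit here.
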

\begin{proof}
We will use a descending induction on $s$ as in the proof of 
Proposition \ref{Prop1}. Let $s=n$. Any form of type $(r,n)$ 
for $0\leq r\leq m$ is $\dbar$-closed. Then $\dbar Y=0$ and  
Lemma \ref{Lem1'} implies that there exists 
$Y\in \Gamma^{\infty}_{(r+1,n)}$ such that  $\mathcal{T}_fY=W$.  

Now we will assume that the lemma is true for $s=l+1,l+2, \ldots, n$ 
and $r=0,1,\ldots,m$ to prove that it is also true for $s=l\leq n-1$ 
and $0\leq r\leq m$.

Assume that  $W\in \Gamma^{\infty}_{(r,l)}$ such that  
$\dbar  W=0$ and $ \mathcal{T}_fW=0$. Then 
Lemma \ref{Lem1'} implies that there exists  
$\widetilde{Y}\in \Gamma^{\infty}_{(r+1,l)}$ such that
$\dbar \widetilde{Y}\in \Gamma^{\infty}_{(r+1,l+1)}$ and 
$W=\mathcal{T}_f\widetilde{Y}$. Then 
\[\mathcal{T}_f\dbar \widetilde{Y}
=\dbar \mathcal{T}_f\widetilde{Y}=\dbar W=0.\]
So $\dbar \widetilde{Y}$ satisfies the 
conditions in the lemma for $s=l+1$. That is, 
$\dbar \widetilde{Y}\in \Gamma^{\infty}_{(r+1,l+1)}, 
	\dbar \dbar \widetilde{Y}=0$  
and  $\mathcal{T}_f\dbar \widetilde{Y}=\dbar W=0$.  Then, by 
the induction hypothesis, there exists $Y_1\in \Gamma^{\infty}_{(r+2,l+1)}$ 
such that $\dbar Y_1=0$ and $\mathcal{T}_fY_1=\dbar \widetilde{Y}$. 
Then since $\D$ is a $L^{\infty}$-pseudoconvex domain 
there exists $Y_2\in \Gamma^{\infty}_{(r+2,l)}$ 
such that $\dbar Y_2=Y_1$. Then 
\[\dbar \mathcal{T}_f Y_2=\mathcal{T}_f\dbar Y_2
=\mathcal{T}_fY_1=\dbar \widetilde{Y}.\]
We define $Y=\widetilde{Y}-\mathcal{T}_fY_2\in \Gamma^{\infty}_{(r+1,l)}$.
Then the equality above implies 
that  $\dbar Y=\dbar \widetilde{Y}-\dbar \mathcal{T}_fY_2=0$ and 
\[\mathcal{T}_fY 
	=\mathcal{T}_f \widetilde{Y}-\mathcal{T}_f\mathcal{T}_fY_2=W.\]
Hence the proof of Proposition \ref{Prop1} is complete. 
\end{proof}

As a corollary to the previous proposition (with $W=1$ and $r=s=0$) 
we get the following Corona type result.  We refer the reader to 
\cite{Krantz14} and the references therein for more information 
about Corona problem on domains in $\C^n$.

\begin{corollary}\label{CorCorona}
Let $\D$ be a bounded $L^{\infty}$-pseudoconvex domain in $\C^n$ 
and $f_j\in H^{\infty}(\D)$ for $j=1,\ldots, m$ such that 
$\sum_{j=1}^m|f_j|^2>\ep$ on $\D$ for some $\ep>0$ and 
$\partial f_j\in L^{\infty}_{(1,0)}(\D)$ for $j=1,\ldots,m$. Then 
there exists $g_i\in H^{\infty}(\D)$ for $j=1,\ldots, m$ such that 
$\sum_{j=1}^mf_jg_j=1$. 
\end{corollary}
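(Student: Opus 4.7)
The plan is to apply Proposition \ref{Prop1} directly, exactly as the author hints, with the trivial input $W=1$ and $r=s=0$. The whole technical apparatus of the $\dbar$-Koszul complex has been set up precisely so that the Corona statement comes out as a one-line consequence of the final induction in Proposition \ref{Prop1}.

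First, I would verify that $W=1\in \Gamma^{\infty}_{(0,0)}=CL^{\infty}(\Omega)$ satisfies the hypotheses of Proposition \ref{Prop1}. We have $\dbar W=\dbar 1=0$ trivially, and $\mathcal{T}_f W=0$ as well, since $W\in \wedge^0 V\otimes CL^{\infty}(\Omega)$ and by the definition given in the paper, $\mathcal{T}_f$ vanishes on elements of $\Gamma^{\infty}_{(0,s)}$. The standing assumptions on $f=(f_1,\dots,f_m)$ in the corollary, namely $\sum_{j=1}^m|f_j|^2>\ep$ on $\Omega$ and $\partial f_j\in L^{\infty}_{(1,0)}(\Omega)$ for each $j$, are exactly the hypotheses of Proposition \ref{Prop1}, and $\Omega$ is $L^{\infty}$-pseudoconvex by assumption.

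Next, Proposition \ref{Prop1} produces a form $Y\in \Gamma^{\infty}_{(1,0)}$ with $\dbar Y=0$ and $\mathcal{T}_f Y=W=1$. Writing $Y=\sum_{j=1}^m e_j\otimes g_j$ for some $g_j\in CL^{\infty}(\Omega)$, the condition $\dbar Y=\sum_j e_j\otimes \dbar g_j=0$ forces $\dbar g_j=0$ for each $j$, so each $g_j$ is a bounded holomorphic function on $\Omega$, i.e.\ $g_j\in H^{\infty}(\Omega)$. The condition $\mathcal{T}_f Y=\sum_{j=1}^m f_j g_j=1$ then gives the desired Bezout identity.

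There is no real obstacle here, since all the substance is embedded in Proposition \ref{Prop1}; the only thing to double-check is that the $Y$ produced lives in the right graded piece so that its coefficients are scalar (not form-valued) bounded holomorphic functions. That is automatic from $s=0$ in the output $Y\in \Gamma^{\infty}_{(r+1,s)}=\Gamma^{\infty}_{(1,0)}$. If one wished, one could alternatively write out the argument by hand: start from the smooth Koszul solution $g_j^{(0)}=\overline{f_j}/\sum_l|f_l|^2$ (as in Lemma \ref{Lem1'} with $W=1$), observe that $\sum f_j g_j^{(0)}=1$ and compute its $\dbar$-defect, and then correct by successive applications of the $L^{\infty}$-$\dbar$-solvability hypothesis to kill the $\dbar$ error — but this is precisely the induction already performed in Proposition \ref{Prop1}, so it is more efficient simply to quote it.
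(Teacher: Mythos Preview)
Your proposal is correct and follows exactly the route the paper intends: apply Proposition~\ref{Prop1} with $W=1$, $r=s=0$, then read off $g_j\in H^{\infty}(\Omega)$ from $Y=\sum_j e_j\otimes g_j$ via $\dbar Y=0$ and $\mathcal{T}_f Y=1$. The paper does not give a separate proof beyond the parenthetical hint, so your write-up is in fact more detailed than what appears there.
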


%%%%%%%%%%%%%%%%%%%%%%%%%%%%%%
\section*{Proofs of Results}
The proofs of the theorems  are mainly inspired by  the proof  in  
Christopher Bishop's paper \cite{Bishop89}.
\begin{proof}[Proofs of Theorems \ref{Thm1} and \ref{Thm2}]
The proofs of both theorems are very similar. So we will present the 
proof of Theorem \ref{Thm1} and comment on how the proof of 
Theorem \ref{Thm2} differs as we go along.  

Let $\epsilon>0$ and $\lambda\in \C^m$. Since $g\in C(\overline{\D})$ 
and $g|_{b\D\cup \Omega_f}=0$, there exist	
$g^{\lambda}\in C^{\infty}(\overline{\D})$  such that 
\begin{itemize}
\item[i.] $\sup\{|g(z)-g^{\lambda}(z)|:z\in \Dc\}<\ep$, 
\item[ii] $\text{supp}(\dbar g^{\lambda})\cap 
 (b\D \cup f^{-1}(\lambda)) =\emptyset$.
\end{itemize} 
 In the proof Theorem \ref{Thm2} the second condition above 
 is replaced by $\text{supp}(\dbar  g^{\lambda})\cap  f^{-1}(\lambda)=\emptyset$. 
 This can be seen as follows: We choose an open set $U_{\ep}$ in $\C^n$ 
 containing $f^{-1}(\lambda)$ and $g_{\ep}\in A^{\infty}(U_{\ep}\cap \D)$ 
 such that $|g-g_{\ep}|<\ep/2$ on $f^{-1}(\lambda)$. Then we choose 
 $\chi_{\ep}\in C^{\infty}_0(U_{\ep})$ such that, 
 $0\leq \chi_{\ep}\leq 1,\chi_{\ep}=1$ on a neighborhood of 
 $f^{-1}(\lambda)$, and 
 \[\text{supp}(\chi_{\ep})\cap \Dc \subset 
 \left\{z\in U_{\ep}\cap \Dc:|g(z)-g_{\ep}(z)|<\ep\right\}.\]
Then we define 
 $g^{\lambda}=(1-\chi_{\ep})g+\chi_{\ep}g_{\ep}$. Since 
 $g^{\lambda}$ is holomorphic on a neighborhood of $f^{-1}(\lambda)$ 
 we have $\dbar  g^{\lambda}=0$ on the same neighborhood. 
Furthermore, $|g^{\lambda}(z)-g(z)|=\chi_{\ep}(z)|g_{\ep}(z)-g(z)|< \ep$ 
for all $z\in \Dc$.

Using Lemma \ref{Lem2} with $r=0,s=1$, and $W=\dbar g^{\lambda}$ 
we get $Y=\sum_{l=1}^me_l\otimes H_l\in \Gamma^{\infty}_{(1,0)}$ 
such that 
\begin{align}\label{Eqn1}
\dbar g^{\lambda}=\mathcal{T}_{f-\lambda}\dbar Y
=\sum_{l=1}^m (f_l-\lambda_l)\dbar H_l^{\lambda}.
\end{align}
The above equality implies that  
\[G_{\lambda}=g^{\lambda}-\sum_{l=1}^m (f_l-\lambda_l)H_l^{\lambda}\]
is a bounded holomorphic function.  
	
In the proof of Theorem \ref{Thm2}, we use Lemma \ref{Lem2.1} 
and get $H_l^{\lambda}\in C^{\infty}(\overline{\D})$ for $l=1,\ldots,m$ 
in the equation \eqref{Eqn1} and $G_{\lambda}$ is smooth up to the 
boundary. Therefore, for $z\in \D$ we have   
\[|G_{\lambda}(z)-g^{\lambda}(z)|
\leq \sum_{l=1}^m |f_l(z)-\lambda_l| \sum_{s=1}^m|H_s^{\lambda}(z)|.\]
Then the above inequality implies that for   
$M_{\lambda}=\sum_{s=1}^m\|H^{\lambda}_s\|_{L^{\infty}(\D)}<\infty $ 
we have   
\begin{align}\label{Eqn2}
|G_{\lambda}(z)-g^{\lambda}(z)|\leq M_{\lambda}|f(z)-\lambda| 
\end{align}
for $z\in \D$.
	
Compactness of $\overline{f(\D)}$ implies that we can choose a finite 
collection of points  $\{\lambda_j\}_{j=1}^k\subset \overline{f(\D)}$ 
such that $\{B(\lambda^j,\epsilon M^{-1}_{\lambda^j})\}_{j=1}^k$ forms 
a finite open cover for  $\overline{f(\D)}$. Let $\{\chi_j\}_{j=1}^k$ be a 
smooth partition of unity on $\overline{f(\D)}$ such that 
$0\leq \chi_j\leq 1$ and $\text{supp}(\chi_j)\subset U_j$. 
Then $\{f^{-1}(B(\lambda^j,\epsilon M^{-1}_{\lambda^j}))\}_{j=1}^k$ 
is an cover for $\D$ and 
$|f(z)-\lambda^j|< \epsilon M^{-1}_{\lambda^j}$ for 
$z\in f^{-1}(B(\lambda^j,\epsilon M^{-1}_{\lambda^j}))$. Then for $z\in \D$ 
we have 
\begin{align*}
\left|\sum_{j=1}^k G_{\lambda^j}(z)\chi_j(f)(z)-g(z)\right|
\leq& \sum_{j=1}^k |G_{\lambda^j}(z)-g(z)|\chi_j(f(z))\\
\leq & \sum_{j=1}^k|G_{\lambda^j}(z)-g^{\lambda^j}|\chi_j(f(z))
+\sum_{j=1}^k|g^{\lambda^j}(z)-g(z)|\chi_j(f(z))\\
\leq &  \sum_{j=1}^kM_{\lambda^j}|f(z)-\lambda^j|\chi_j(f(z))
	+\ep  \sum_{j=1}^k\chi_j(f(z)) \\
\leq& 2\epsilon.
\end{align*}
Finally, the Stone-Weierstrass Theorem implies that $\chi_j(f)$ can be 
approximated uniformly on $\Dc$ by elements of 
$\mathbb{C}[f_1,\ldots, f_m, \overline{f_1},\ldots ,\overline{f_m}]$. 
Hence the proofs of Theorems \ref{Thm1} and \ref{Thm2} are complete. 
\end{proof}

Hartogs Extension Theorem together Theorem \ref{Thm2} lead to the 
following corollary. 

\begin{corollary}
Let $\D$ be a bounded $L^{\infty}$-pseudoconvex domain in $\C^n$. 
Assume that $f=(f_1,\ldots,f_m):\D\to \C^m$  be a bounded holomorphic 
mapping and $g\in C(\Dc)$ such that $\dbar g$ is supported away from 
$b\D$ and the set of points at which the Jacobian of $f$ has rank strictly 
less than $n$. Then $g$ belongs to the closure of 
$H^{\infty}(\D)[\overline{f_1},\cdots, \overline{f_m}]$ in $L^{\infty}(\D)$. 
\end{corollary}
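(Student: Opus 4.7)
The plan is to decompose $g = h + \tilde g$ with $h \in H^{\infty}(\D)$ and $\tilde g \in C(\Dc)$ satisfying $\tilde g|_{b\D \cup \D_f} = 0$, and then invoke Theorem \ref{Thm1}.

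Let $K = \text{supp}(\dbar g)$, which by hypothesis is a compact subset of $\D$ disjoint from $\Sigma := \{z \in \D : \text{rank}\, J_f(z) < n\}$. Since $\dbar g = 0$ as a distribution on $\D \setminus K$, elliptic regularity for $\dbar$ shows that $g$ is holomorphic on $\D \setminus K$. Let $V_0$ denote the connected component of $\D \setminus K$ containing a neighborhood of $b\D$ inside $\D$ (which exists since $K$ is at positive distance from $b\D$; if several such components exist we pick one, at the cost of a minor extra argument). For $n \geq 2$, the Hartogs extension theorem produces a holomorphic function $h$ on $\D$ with $h = g$ on $V_0$. Setting $h := g$ on $b\D$ makes $h$ continuous on $\Dc$, and the maximum principle gives $\|h\|_{L^{\infty}(\D)} \leq \|g\|_{L^{\infty}(\D)}$, so $h \in H^{\infty}(\D)$.

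The difference $\tilde g := g - h \in C(\Dc)$ vanishes on $V_0 \cup b\D$. To show it also vanishes on $\D_f$, consider an irreducible positive-dimensional component $Z$ of a level set $f^{-1}(\lambda)$: it is a connected analytic subset of $\D$ contained in $\Sigma$, and since $\D$ is Stein it cannot be compact in $\D$, so $Z$ accumulates on $b\D$. Now $Z \cap K \subset \Sigma \cap K = \emptyset$ gives $Z \subset \D \setminus K$, and connectedness of $Z$ together with the presence of points of $Z$ arbitrarily close to $b\D$ forces $Z \subset V_0$. Hence $\D_f \subset V_0$, so $\tilde g|_{b\D \cup \D_f} = 0$, and Theorem \ref{Thm1} places $\tilde g$---and therefore $g = h + \tilde g$---in the $L^{\infty}(\D)$-closure of $H^{\infty}(\D)[\overline{f_1}, \ldots, \overline{f_m}]$.

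The main obstacle is the Hartogs step, which forces the implicit hypothesis $n \geq 2$ and invokes pseudoconvexity beyond the mere $L^{\infty}$-solvability of $\dbar$: once to justify the Hartogs extension itself, and once---via the Stein property of $\D$---to rule out compact positive-dimensional analytic subvarieties so that every component of $\D_f$ lies in $V_0$. The case $n = 1$ would require a separate argument.
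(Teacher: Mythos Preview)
Your argument follows the same two-step outline as the paper's: use Hartogs to split $g=h+\tilde g$ with $h\in H^{\infty}(\D)$ and $\tilde g$ compactly supported in $\D$, then feed $\tilde g$ into one of the approximation theorems. The difference is in the second step. The paper simply notes that $\tilde g$ is \emph{holomorphic} on a neighborhood of the rank-deficient locus of $f$ (since $\dbar\tilde g=\dbar g$ is supported away from it) and invokes Theorem~\ref{Thm2}, whose criterion only asks that $\tilde g|_{\D'_{f,\lambda}}\in A_{\Dc}(\D'_{f,\lambda})$. You instead push through to Theorem~\ref{Thm1}, which requires $\tilde g$ to \emph{vanish} on $\D_f$; to get this you add the observation that, $\D$ being pseudoconvex (hence Stein), every positive-dimensional component $Z$ of a fiber $f^{-1}(\lambda)$ is noncompact in $\D$, hence meets any neighborhood of $b\D$, and since $Z$ is connected and disjoint from $K=\text{supp}(\dbar g)$ it lies in a component of $\D\setminus K$ on which the holomorphic function $\tilde g$ vanishes identically. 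This extra step is not in the paper, and it buys you something: Theorem~\ref{Thm1} matches the hypotheses of the corollary ($L^{\infty}$-pseudoconvex $\D$, $f_j\in H^{\infty}(\D)$) exactly, whereas Theorem~\ref{Thm2} as stated asks for $\D$ smooth and $f_j\in A^{\infty}(\D)$. Your closing caveats about the implicit hypothesis $n\geq 2$ and the reliance on genuine pseudoconvexity (for both Hartogs and the no-compact-subvarieties fact) apply equally to the paper's proof.
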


\begin{proof}
Since $\dbar g$ vanishes near the boundary of $\D$, Hartogs Extension 
Theorem implies that there exists $g_1\in H^{\infty}(\D)$ such that $g=g_1$ 
near the boundary of $\D$. Then $g_2=g-g_1\in C(\Dc)$ and $g_2$ is compactly 
supported in $\D$. Furthermore, $g_2$ is holomorphic on a neighborhood of 
the set where the Jacobian of $f$ has rank strictly less than $n$. Therefore, 
Theorem \ref{Thm2} implies that $g_2$  can be approximated in  the sup-norm by 
functions in  $H^{\infty}(\D)[\overline{f_1}, \ldots, \overline{f_m}]$. 
This completes the proof of the corollary.
\end{proof}

Next we provide the proof of Corollary \ref{CorLp}.

\begin{proof}[Proof of Corollary \ref{CorLp}]
Obviously i. implies ii. So to prove that ii. implies iii., let us assume 
that $H^{\infty}(\D)[\overline{f_1},\ldots, \overline{f_m}]$ is dense 
in $L^p(\D)$ for some $1\leq p<\infty$. Let $B\subset \D$ be a ball 
such that $\overline{B}\subset \D$. Then, the algebra 
$H^{\infty}(B)[\overline{f_1},\ldots, \overline{f_m}]$ is dense 
in $L^p(B)$ for some $1\leq p<\infty$. Moreover, the algebra generated by 
$\{z_1,\ldots,z_n\}$ is dense in $H^{\infty}(B)$ and $f_1,\cdots, f_m$ 
are holomorphic on a neighborhood of $\overline{B}.$ 
Next we adopt \cite[Theorem 4.2]{IzzoLi13} to our set-up. Namely,  
\cite[Theorem 4.2]{IzzoLi13} implies that if the algebra generated by  
$\{z_1,\ldots, z_n,\overline{f}_1,\ldots, \overline{f}_m\}\subset C^{\infty}(B)$ 
is dense in $L^p(B)$ for some $1\leq p<\infty$ then the real Jacobian of 
$\{z_1,\ldots, z_n,\overline{f}_1,\ldots, \overline{f}_m\}$ is of full rank 
on a dense open set in $B$. Hence the rank of $J_f$ is $n$ on a dense 
open subset in $B$ and (by identity principle) in $\D$. Hence, we have iii.

Finally, to prove iii. implies i. we assume that the rank  of $J_f$ is $n$ for some 
$z\in \D$. Then, the set of points at which $J_f$ has rank strictly less than $n$  
is a closed set of measure 0 (see \cite[Theorem 3.7]{RangeBook}). One can 
show that $X_f$, the set of smooth functions with compact support in $\D$  
and vanish where $J_f$ has rank strictly less than $n$, is dense in $L^p(\D)$ 
for all $0<p<\infty$. On the other hand, Theorem \ref{Thm1} implies that 
any function in $X_f$ is in the closure of 
$H^{\infty}(\D)[\overline{f_1},\ldots, \overline{f_m}]$ in $L^{\infty}(\D)$.  
Therefore,   $H^{\infty}(\D)[\overline{f_1},\ldots, \overline{f_m}]$ is dense in 
$L^p(\D).$ Hence, we have i.
\end{proof}

We finally end the paper with the proof of Corollary \ref{CorACR}.

\begin{proof}[Proof of Corollary \ref{CorACR}]
We will use the fact that $T_g$ can be defined by the following formula  
\[\langle T_g\phi, \psi\rangle_{A^2(\D)}=\langle g\phi, \psi\rangle_{L^2(\D)}\]
for all $ \phi, \psi \in A^2(\D).$ Since $T_g$ commutes with $T_{P(f)}$, 
for any holomorphic polynomial $P$, we have
\[\langle gP(f),\psi \rangle= \langle T_gT_{P(f)}(1), \psi\rangle
=\langle P(f)T_g(1),\psi\rangle\] 
for all $\psi\in A^2(\D)$. Then $\langle T_{g}(1)-g, \overline{P(f)}\psi \rangle=0$ 
for all $\psi\in A^2(\D)$. Since, by Corollary \ref{CorLp}, the subspace generated by 
$\{\overline{P(f)}\psi: \psi \in A^2(\D)\}$ is dense in $L^2(\D),$  we conclude 
that  $T_g(1)=g$. That is, $g$ is holomorphic.
\end{proof}

\section*{Acknowledgement}

We would like to thank Alexander Izzo for reading an earlier manuscript of this paper 
and for providing us with valuable comments. We are also thankful to the 
anonymous referee for helpful feedback.

%\bibliographystyle{amsalpha}
%\bibliography{BishopThm}

\begin{thebibliography}{A{\v{C}}R00}
	
	\bibitem[A{\v{C}}91]{AxlerCuckovic91}
	Sheldon Axler and {\v{Z}}eljko {\v{C}}u{\v{c}}kovi{\'c}, \emph{Commuting
		{T}oeplitz operators with harmonic symbols}, Integral Equations Operator
	Theory \textbf{14} (1991), no.~1, 1--12.
	
	\bibitem[A{\v{C}}R00]{AxlerCuckovicRao00}
	Sheldon Axler, {\v{Z}}eljko {\v{C}}u{\v{c}}kovi{\'c}, and N.~V. Rao,
	\emph{Commutants of analytic {T}oeplitz operators on the {B}ergman space},
	Proc. Amer. Math. Soc. \textbf{128} (2000), no.~7, 1951--1953.
	
	\bibitem[AL16]{AppuhamyLe16}
	Amila Appuhamy and Trieu Le, \emph{Commutants of {T}oeplitz operators with
		separately radial polynomial symbols}, Complex Anal. Oper. Theory \textbf{10}
	(2016), no.~1, 1--12.
	
	\bibitem[AS87]{AxlerShields87}
	Sheldon Axler and Allen Shields, \emph{Algebras generated by analytic and
		harmonic functions}, Indiana Univ. Math. J. \textbf{36} (1987), no.~3,
	631--638.
	
	\bibitem[BH64]{BrownHalmos64}
	Arlen Brown and P.~R. Halmos, \emph{Algebraic properties of {T}oeplitz
		operators}, J. Reine Angew. Math. \textbf{213} (1963/1964), 89--102.
	
	\bibitem[Bis89]{Bishop89}
	Christopher~J. Bishop, \emph{Approximating continuous functions by holomorphic
		and harmonic functions}, Trans. Amer. Math. Soc. \textbf{311} (1989), no.~2,
	781--811.
	
	\bibitem[BL11]{BauerLe11}
	Wolfram Bauer and Trieu Le, \emph{Algebraic properties and the finite rank
		problem for {T}oeplitz operators on the {S}egal-{B}argmann space}, J. Funct.
	Anal. \textbf{261} (2011), no.~9, 2617--2640.
	
	\bibitem[Cao08]{Cao08}
	Guangfu Cao, \emph{On a problem of {A}xler, {C}uckovic and {R}ao}, Proc. Amer.
	Math. Soc. \textbf{136} (2008), no.~3, 931--935 (electronic).
	
	\bibitem[{\v{C}}R98]{CuckovicRao98}
	{\v{Z}}eljko {\v{C}}u{\v{c}}kovi{\'c} and N.~V. Rao, \emph{Mellin transform,
		monomial symbols, and commuting {T}oeplitz operators}, J. Funct. Anal.
	\textbf{154} (1998), no.~1, 195--214.
	
	\bibitem[CS01]{ChenShawBook}
	So-Chin Chen and Mei-Chi Shaw, \emph{Partial differential equations in several
		complex variables}, AMS/IP Studies in Advanced Mathematics, vol.~19, American
	Mathematical Society, Providence, RI; International Press, Boston, MA, 2001.
	
	\bibitem[CY14]{ChoeYang14}
	Boo~Rim Choe and Jongho Yang, \emph{Commutants of {T}oeplitz operators with
		radial symbols on the {F}ock-{S}obolev space}, J. Math. Anal. Appl.
	\textbf{415} (2014), no.~2, 779--790.
	
	\bibitem[DFF99]{DiederichFischerFornaess99}
	Klas Diederich, Bert Fischer, and John~Erik Forn{\ae}ss, \emph{H\"older
		estimates on convex domains of finite type}, Math. Z. \textbf{232} (1999),
	no.~1, 43--61.
	
	\bibitem[FLZ11]{FornaessLeeYuan11}
	John~Erik Forn{\ae}ss, Lina Lee, and Yuan Zhang, \emph{On supnorm estimates for
		{$\overline\partial$} on infinite type convex domains in {$\mathbb{C}^2$}}, J.
	Geom. Anal. \textbf{21} (2011), no.~3, 495--512.
	
	\bibitem[IL13]{IzzoLi13}
	Alexander~J. Izzo and Bo~Li, \emph{Generators for algebras dense in
		{$L^p$}-spaces}, Studia Math. \textbf{217} (2013), no.~3, 243--263.
	
	\bibitem[Izz11]{Izzo11}
	Alexander~J. Izzo, \emph{Uniform approximation on manifolds}, Ann. of Math. (2)
	\textbf{174} (2011), no.~1, 55--73.
	
	\bibitem[Koh73]{Kohn73}
	J.~J. Kohn, \emph{Global regularity for {$\overline{\partial}$} on weakly
		pseudo-convex manifolds}, Trans. Amer. Math. Soc. \textbf{181} (1973),
	273--292.
	
	\bibitem[Kra14]{Krantz14}
	Steven~G. Krantz, \emph{The corona problem in several complex variables}, The
	corona problem, Fields Inst. Commun., vol.~72, Springer, New York, 2014,
	pp.~107--126.
	
	\bibitem[Le08]{Le08}
	Trieu Le, \emph{The commutants of certain {T}oeplitz operators on weighted
		{B}ergman spaces}, J. Math. Anal. Appl. \textbf{348} (2008), no.~1, 1--11.
	
	\bibitem[Le17]{Le17}
	\bysame, \emph{Commutants of separately radial {T}oeplitz operators in several
		variables}, J. Math. Anal. Appl. \textbf{453} (2017), no.~1, 48--63.
	
	\bibitem[LT17]{LeTikaradze17}
	Trieu Le and Akaki Tikaradze, \emph{Commutants of {T}oeplitz operators with
		harmonic symbols}, New York J. Math. \textbf{23} (2017), 1723--1731.
	
	\bibitem[Ran86]{RangeBook}
	R.~Michael Range, \emph{Holomorphic functions and integral representations in
		several complex variables}, Graduate Texts in Mathematics, vol. 108,
	Springer-Verlag, New York, 1986.
	
	\bibitem[Ran90]{Range90}
	\bysame, \emph{Integral kernels and {H}\"older estimates for
		{$\overline{\partial}$} on pseudoconvex domains of finite type in {$\mathbb{C}^2$}}, 
	Math. Ann. \textbf{288} (1990), no.~1, 63--74.
	
	\bibitem[SH80]{SergeevHenkin80}
	A.~G. Sergeev and G.~M. Henkin, \emph{Uniform estimates of the solutions of the
		$\overline{\partial}$-equation in pseudoconvex polyhedra}, Mat. Sb. (N.S.)
	\textbf{112(154)} (1980), no.~4(8), 522--567, translation in Math. USSR-Sb.
	\textbf{40} (1981), no. 4, 469--507.
	
	\bibitem[SW12]{SamuelssonWold12}
	H{\aa}kan Samuelsson and Erlend~Forn{\ae}ss Wold, \emph{Uniform algebras and
		approximation on manifolds}, Invent. Math. \textbf{188} (2012), no.~3,
	505--523.
	
	\bibitem[Zhe98]{Zheng98}
	Dechao Zheng, \emph{Commuting {T}oeplitz operators with pluriharmonic symbols},
	Trans. Amer. Math. Soc. \textbf{350} (1998), no.~4, 1595--1618.
	
\end{thebibliography}

\end{document}